\documentclass{article}
\usepackage[utf8]{inputenc}
\usepackage[T1]{fontenc}
\usepackage{url}

\usepackage[pdftex,breaklinks,pdfpagemode={None},pdfstartview={FitH},
            pdfview={FitH},colorlinks,linkcolor={black},
            citecolor={black}, urlcolor={black}]{hyperref}
\usepackage{graphicx} 
\usepackage{amsthm}
\usepackage{amssymb,amsmath,color}
\usepackage{bbm}
\usepackage{MnSymbol}
\newtheorem{theorem}{Theorem}[section]

\newtheorem{corollary}[theorem]{Corollary}
\newtheorem{lemma}[theorem]{Lemma}
\newtheorem{defin}[theorem]{Definition}

\usepackage{tikz}
\usetikzlibrary{graphs,decorations.pathmorphing, decorations.pathreplacing, angles, quotes, shapes, arrows, topaths, calc, patterns, backgrounds, positioning, arrows, shapes.multipart}

\tikzset{st/.style = {circle,draw = blue!40,fill = blue!40,
  inner sep = 1pt, outer sep = 0pt}}
\tikzset{stt/.style = {circle,draw = teal!100,fill = teal!100,
  inner sep = 1.5pt, outer sep = 0pt}}  
\tikzset{inn/.style = {circle,draw = black!20,fill = black!10,
  inner sep = 2pt, outer sep = 0pt}}
\tikzset{nwtext/.style = {draw = white,fill = white,
    anchor= north west,inner sep = 2pt, outer sep = 0pt}}
\tikzset{wtext/.style = {draw = white,fill = white,
    anchor= west,inner sep = 2pt, outer sep = 0pt}}
\tikzset{elip/.style={ellipse,draw=black,fill=blue!15,
   inner sep = 2pt, outer sep = 2pt}}
\tikzset{min/.style={inner sep = 0pt, outer sep = 0pt}}

\newcommand{\tn}{\tilde{n}}
\newcommand{\hh}[1]{\frac{#1}{2}}
\newcommand{\hf}[1]{\lfloor\frac{#1}{2}\rfloor}
\newcommand{\hc}[1]{\lceil\frac{#1}{2}\rceil}
\newcommand{\tr}{\textrm}

\newcommand{\email}[1]{\href{mailto:#1}{\tt#1}}

\date{}
\begin{document}
\title{Weighted Cages}

\author{G. Araujo-Pardo$^{1,4}$ \and C. De la Cruz$^{2,5,6}$ \and
  M. Matamala$^{3,7}$ \and M. A. Pizaña$^{2,6}$}

\maketitle

\footnotetext[1]{Universidad Nacional Autónoma de México. \email{garaujo@im.unam.mx}}
\footnotetext[2]{Universidad Autónoma Metropolitana. \href{mailto:clau_mar@ciencias.unam.mx}{\tt clau\_mar@ciencias.unam.mx}, \email{mpizana@gmail.com}}
\footnotetext[3]{Department of Mathematical Engineering and Center for Mathematical Modeling, CNRS IRL 2807, Universidad de Chile, Santiago, Chile. \email{mar.mat.vas@dim.uchile.cl}}
\footnotetext[4]{Partially supported by PAPIIT-M\'exico under grant IN113324 and CONAHCyT: CBF 2023-2024-552.}
\footnotetext[5]{Partially supported by CONAHCYT, grant 799875.}
\footnotetext[6]{Partially supported by CONAHCYT, grant A1-S-45528.}
\footnotetext[7]{Supported by Centro de Modelamiento Matemático (CMM), FB210005, BASAL funds for centers of excellence from ANID-Chile.}
\renewcommand{\thefootnote}{\arabic{footnote}}

\begin{abstract}Cages ($r$-regular graphs of girth $g$ and minimum
  order) and their variants have been studied for over seventy
  years. Here we propose a new variant, \emph{weighted cages}. We
  characterize their existence; for cases $g=3,4$ we determine their
  order; we give Moore-like bounds and present some computational
  results.\end{abstract}

\section{Introduction}\label{sec:intro}

Cages~\cite{EJ13} have been studied since 1947 when they were
introduced by Tutte in \cite{Tut47}.  They are regular graphs of a
given girth with the smallest number of vertices for the given
parameters.  In 1963 Sachs \cite{Sac63} proved that for each $k\geq 2$
and each $g\geq 3$ there is a $k$-regular graph of girth $g$ which
implies that a cage exists for each such parameters. The smallest
integer $n$ for which there is a $k$-regular graph of girth $g$ on $n$
vertices is denoted by $n(k,g)$ and a $k$-regular graph of girth $g$
with $n(k,g)$ vertices is called a $(k,g)$-cage. Several variations of
the notion of cage have been studied in the literature including, among
others: Biregular cages~\cite{AABLL13,EJ16}, biregular bipartite
cages~\cite{AJRS22,FRJ19}, vertex-transitive cages~\cite{JS11}, Cayley
cages~\cite{EJS13}, mixed cages~\cite{ADG22,AHM19,Exo25} and mixed
geodetic cages~\cite{TE22}.

Standard terminology on graph theory used here, will be quickly
reviewed in the next section.

In this work, we extend the notion of cage to weighted graphs. In
general, a \emph{weighted graph}, is a (simple, finite) graph
$G=(V,E)$ together with a weight function
$w:E(G)\rightarrow \mathbb{R}$, however, to keep the presentation as
simple as possible, we shall focus on weight functions of the form
$w:E(G)\rightarrow \{1,2\}$. An edge with weight 1 is called a
\emph{light edge} while one with weight 2 is called a \emph{heavy
  edge}.

Under these circumstances, each weighted graph $G$, \emph{wgraph} for
short, may be specified by a couple of graphs $L=L(G)$ and $H=H(G)$,
which are the spanning subgraphs of $G$ formed by the light edges and
the heavy edges of $G$ (respectively). Hence, we shall represent a
wgraph $G$ by $G=(L,H)$, where $L$ and $H$ are graphs such that
$V(L)=V(H)$ and $E(L)\cap E(H)=\varnothing$.

In order to maintain the regularity aspect of the original notion we
require $L$ and $H$ to be regular. Thus an \emph{$(a,b)$-regular
  wgraph} is a wgraph $G=(L,H)$ where $L$ is $a$-regular and $H$ is
$b$-regular. A \emph{wcycle} in $G$ is a cycle whose edges may be
light or heavy and its \emph{weight} is the sum of the weights of the
edges composing it. The \emph{girth} of a wgraph $G$ is the minimum
weight of its wcycles. Finally, by analogy with cages, we may define
an \emph{$(a,b,g)$-wgraph} as an $(a,b)$-regular wgraph of girth $g$,
and an \emph{$(a,b,g)$-wcage} as an \emph{$(a,b,g)$-wgraph} of minimum
order.  We shall represent the order of an $(a,b,g)$-wcage by
$n(a,b,g)$.

In this paper, we characterize their existence and, for the cases
$g=3,4$, we determine the value of $n(a,b,g)$; We also determine
$n(a,b,g)$ for $a=1,2$ when $g=5,6$. We give Moore-like bounds and
present some computational results.  

An interesting feature of weighted cages is that, contrary to what
happens with ordinary cages, $n(a,b,g)$ is not always monotone
increasing in all its parameters, since we shall see that
$n(3,1,4) = 8 > 6 = n(3,2,4)$ in Section~\ref{sec:g4} and that
$n(4,1,5) = 20 > 19 = n(4,2,5)$ in Section~\ref{sec:exp}.

We note that many of our results may be readily extended to weights of
the form $w:E(G)\rightarrow \{w_{1},w_{2}\}\subset\mathbb{N}$.

\section{Terminology and Preliminaries}\label{sec:prelim}
 
Our graphs are simple and finite. We use standard terminology for
denoting the \emph{set of vertices} and the \emph{set of edges} of a
graph $X$: $X=(V,E)$, $V=V(X)$ and $E=E(X)$. The \emph{order} of a
graph $X$ is $|X|=|V(X)|$. An \emph{edge} is an unordered pair of
vertices $\{x,y\}$, which we may also write as $xy$. We write
$x\simeq_{X} y$ for the \emph{adjacent-or-equal relation} on a graph
$X$. The \emph{degree} of a vertex $x$ in $X$ is defined by
$deg_{X}(x)=|\{xy : xy \in E(X)\}|$. The maximum degree is
$\Delta(X)=\max \{deg_{X}(x) : x \in V(X)\}$. A graph $X$ is
$r$-regular if $deg(x)=r$, for all vertices $x\in V(X)$. The
\emph{distance} between vertices $x$ and $y$ in $X$ is denoted by
$dist_X(x,y)$. The \emph{complete graphs} on $n$ vertices are
represented by $K_{n}$ and the \emph{complete balanced bipartite
  graphs} on $n$ vertices are denoted by $K_{m,m}$, where
$m=\hh{n}$. Given graphs $X$ and $Y$, some standard operations on
graphs are: the \emph{complement} of a graph
$\overline{X} = (V(X),\overline{E(X)})$, where
$\overline{E(X)}=\{\{x,y\} : x,y\in V(X), x\neq y\textup{ and }
\{x,y\}\not\in E(X)\}$, the \emph{square} of a graph
$X^{2}=(V(X),E(X^{2}))$, where
$E(X^{2})=\{\{x,y\} : 0< dist_{X}(x,y)\leq 2\}$ and the \emph{union}
of graphs $X\cup Y = (V(X)\cup V(Y), E(X)\cup E(Y))$, while the
\emph{disjoint union} is
$X\cupdot Y = (V(X)\cupdot V(Y),E(X)\cupdot E(Y))$. Here we define the
\emph{difference} of graphs as $X-Y=(V(X),E(X)-E(Y)$, that is, the
edges of $Y$, are removed from $X$, but not the vertices. The
\emph{girth} $g(X)$ of a graph, $X$, is the length of a shortest
cycle in $X$. An \emph{$(r,g)$-graph} is an $r$-regular graph of girth
$g$. An \emph{$(r,g)$-cage} is an $(r,g)$-graph of minimum order. The
order of an $(r,g)$-cage is denoted by $n(r,g)$; when no such cage
exists, we define $n(r,g)=\infty$ (this happens exactly when $r< 2$
or $g<3$).

A \emph{weighted graph} (\emph{wgraph} for short) is $G=(L,H)$, where
$L=L(G)$ is the \emph{light-subgraph} of $G$ and $H=H(G)$ is the
\emph{heavy-subgraph} of $G$; both $L$ and $H$ are ordinary graphs and
we require that $V(L)=V(H)$ and $E(L)\cap E(H)=\varnothing$. Light
edges have weight 1 and heavy edges have weight 2.  A \emph{wcycle}
(\emph{wpath}) in $G$ is a cycle (path) whose edges may be light or
heavy and its \emph{weight} is the sum of the weights of the edges
composing it. The \emph{wdistance} between two vertices $x$ and $y$ in
$G$ is the minimum weight of a wpath in $G$ joining $x$ and $y$. Other
terms like \emph{wtree} and \emph{subwgraph} will be used with the
obvious meaning.

We say that $G=(L,H)$ is \emph{$(a,b)$-regular} if $L$ is $a$-regular
and $H$ is $b$-regular. The \emph{girth}, $g(G)$, of a wgraph is the
minimum weight of a wcycle in $G$.  An \emph{$(a,b,g)$-wgraph} $G$ is
an $(a,b)$-regular wgraph $G$ of girth $g$ and an
\emph{$(a,b,g)$-wcage} is an $(a,b,g)$-wgraph of minimum order. We
define $n(a,b,g)$ as the order of an $(a,b,g)$-wcage (and we define
$n(a,b,g)=\infty$ if there is no such $(a,b,g)$-wcage).

It should be clear that $n(a,b,g)=\infty$ whenever $a+b\leq 1$ or
$g<3$.  Also, it is immediate that $n(a,0,g)=n(a,g)$ and that
$n(0,b,g)=n(b,\hh{g})$ whenever $g$ is even and $g\geq 6$ (otherwise,
$n(0,b,g)=\infty$). A $(1,1,g)$-wcage must be a wcycle of weight $g$
with alternating light and heavy edges, and hence:
$$n(1,1,g)=\begin{cases} 
\frac{2g}{3} & \textup{if } g\geq 6\textup{ and }g\equiv 0 \mod 3,\\ 
\infty & \textup{otherwise.}
\end{cases}$$

We shall use \emph{congruence module 2} very often, and hence we shall
abbreviate ``$x \equiv y \mod 2$'' simply as ``$x\equiv y$''. It is a
well know result (sometimes called \emph{the first theorem of graph
  theory} or the \emph{degree-sum formula}) that the sum of the
degrees of a graph is even (and equals twice the number of edges). For an $r$-regular graph of order $n$, this means $nr\equiv 0$, and hence
that there are no odd-regular graphs of odd order. This fact will be
used very often in this paper and we shall refer to it simply as
``\emph{parity forbids}'', as in: ``parity forbids $r=3$ and
$n=7$''. We shall often need the following four lemmas:

\begin{lemma}\label{lem:trivialbound}
  Let $a,b\geq 0$ and $g\geq 3$.  Then $n(a,b,g)\geq a+b+1$. Moreover,
  if $ab \equiv 1$, then $n(a,b,g)\geq a+b+2$.
\end{lemma}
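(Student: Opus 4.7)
The plan is to prove both bounds by elementary counting, with the second part using the parity argument (``parity forbids'') that the paper itself highlights. The girth hypothesis $g \geq 3$ never enters the argument; only the $(a,b)$-regularity matters.

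For the first inequality, I would fix any vertex $v \in V(G)$ and count its neighbors. Since $L$ is $a$-regular, $v$ has exactly $a$ light neighbors, and since $H$ is $b$-regular, $v$ has exactly $b$ heavy neighbors. The key observation is that these two neighbor sets are disjoint: if some $y$ were both a light and a heavy neighbor of $v$, then $\{v,y\}$ would belong to both $E(L)$ and $E(H)$, contradicting the defining condition $E(L)\cap E(H)=\varnothing$ of a wgraph. Hence $v$ has $a+b$ distinct neighbors, giving $|V(G)|\geq a+b+1$.

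For the refinement when $ab\equiv 1$ (i.e.\ both $a$ and $b$ are odd), I would argue by contradiction. Suppose $n(a,b,g)=a+b+1$. Since $a$ and $b$ are both odd, $a+b$ is even, so $n:=a+b+1$ is odd. But $L$ is then an $a$-regular graph of odd order with $a$ odd, which parity forbids (the degree-sum formula gives $na$ even, impossible). Hence $n(a,b,g)\geq a+b+2$.

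There is no real obstacle here: the only subtlety is remembering that light- and heavy-neighborhoods are disjoint (not just that the edge sets are), which follows directly from the wgraph definition, and then invoking the parity argument already set up in the preliminaries.
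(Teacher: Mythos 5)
Your proof is correct and follows essentially the same route as the paper's: count the $a+b$ distinct neighbors of a single vertex (the disjointness you spell out via $E(L)\cap E(H)=\varnothing$ is implicit in the paper's phrasing) and then invoke the parity/degree-sum argument to exclude $n=a+b+1$ when $a$ and $b$ are both odd. The only cosmetic difference is that the paper first dispatches the degenerate case where no $(a,b,g)$-wcage exists (so $n(a,b,g)=\infty$ by convention and the inequalities hold vacuously), a case your argument tacitly skips but which costs nothing to add.
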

\begin{proof}
  If there is no $(a,b,g)$-wcage, then, by definition,
  $n(a,b,g)=\infty$ and the inequalities hold. Otherwise, take an
  $(a,b,g)$-wcage $G=(L,H)$ and a vertex $x\in G$. Then $x$ must have
  $a$ neighbors in $L$ and $b$ neighbors in $H$, and therefore the
  closed neighborhood of $x$ in $G$ must have $a+b+1$ vertices. Thus
  $n(a,b,g)=|G|\geq a+b+1$. Parity forbids $n=a+b+1$ when
  $ab\equiv 1$, hence $n(a,b,g)\geq a+b+2$ in that case.
\end{proof}

Recall that a \emph{$k$-factor}, $F$, of a graph $X$ is a $k$-regular
spanning subgraph of $X$. Thus a $1$-factor is a \emph{perfect matching} and a
$2$-factor is a collection of cycles that span all of $X$. A
\emph{$k$-factorization} of $X$ is a decomposition of $X$ into
$k$-factors, that is, a collection of $k$-factors $\{F_i\}_{i\in I}$,
such that $E(F_{i})\cap E(F_{j})=\varnothing$ for all $i\neq j$ and
$G = \bigcup_{i\in I} F_{i}$.

\begin{lemma}\label{lem:2factor}
  If $5 \leq n\equiv 1$, there is a $2$-factorization of $K_{n}$,
  $K_{n}=\bigcup_{i=1}^{\hf{n}} F_{i}$, such that $F_{1}\cup F_{2}$
  contains a triangle.
\end{lemma}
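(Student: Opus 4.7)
The plan is to place the required triangle directly inside $F_1$ from the outset, and then complete the $2$-factorization of $K_n$ via a classical theorem. I will split into two cases: $n=5$ and odd $n\geq 7$.

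For $n=5$ we have $\lfloor n/2\rfloor=2$, so any $2$-factorization of $K_5$ consists of two Hamilton $5$-cycles whose union is $K_5$ itself, which trivially contains triangles; an explicit example is $F_1=0\text{-}1\text{-}2\text{-}3\text{-}4\text{-}0$ and $F_2=0\text{-}2\text{-}4\text{-}1\text{-}3\text{-}0$. This case needs no further thought.

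For odd $n\geq 7$, let $T$ be the triangle on three fixed vertices, say $\{v_1,v_2,v_3\}$, and let $C$ be any Hamilton cycle on the remaining $n-3$ vertices (possible since $K_{n-3}$ is complete with $n-3\geq 4$). Set $F_1 := T \cup C$; this is a spanning $2$-regular subgraph of $K_n$, i.e., a $2$-factor, and by construction it already contains the triangle $T$. It then suffices to $2$-factorize $K_n\setminus F_1$ into further $2$-factors $F_2,\ldots,F_{(n-1)/2}$, for in that case $F_1\cup F_2 \supseteq F_1 \supseteq T$ immediately gives the desired triangle.

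The one substantive step is producing the $2$-factorization of $K_n\setminus F_1$. This graph is $(n-3)$-regular on $n$ vertices, and $n-3$ is even because $n$ is odd, so Petersen's theorem on $2k$-regular graphs yields a $2$-factor $F_2\subseteq K_n\setminus F_1$; the remainder $K_n\setminus(F_1\cup F_2)$ is then $(n-5)$-regular, Petersen's theorem applies again, and iterating this peeling process produces all of $F_3,\ldots,F_{(n-1)/2}$. The invocation of Petersen's theorem is the only non-elementary ingredient; everything else is bookkeeping on $F_1$. No obstacle of real difficulty appears.
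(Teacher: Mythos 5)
Your proof is correct, but it takes a genuinely different route from the paper's. The paper labels $V(K_n)$ by $\mathbb{Z}_n$ and takes the explicit circulant decomposition $E(F_i)=\{\{x,x+i\}:x\in\mathbb{Z}_n\}$ for $i=1,\dots,\lfloor n/2\rfloor$; the triangle on $\{0,1,2\}$ then uses two edges of $F_1$ and one of $F_2$, so the whole argument is a two-line explicit construction with no external ingredients. You instead build $F_1$ as a triangle plus a Hamilton cycle on the remaining $n-3$ vertices and decompose the $(n-3)$-regular complement $K_n - F_1$ by (iterated) Petersen's $2$-factor theorem. Both arguments are sound: the degree of the complement is even precisely because $n$ is odd, Petersen applies component-wise so no connectivity issue arises, and the factor count $1+(n-3)/2=\lfloor n/2\rfloor$ is right; your $n=5$ base case is also fine since the two factors there exhaust $K_5$. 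What your approach buys is a slightly stronger conclusion (the triangle lies entirely in a single factor $F_1$, not merely in $F_1\cup F_2$) and more flexibility in prescribing $F_1$; what it costs is the reliance on a classical but non-elementary theorem where the paper needs only a direct verification. Either is acceptable; the paper's is shorter and self-contained.
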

\begin{proof} Label the vertices of $K_{n}$ with the elements of
  $\mathbb{Z}_{n}$. For $i\in \{1,2,\ldots,\hf{n}\}$ define $F_{i}$ as
  the spanning subgraph of $K_{n}$ having edge set
  $E(F_{i})=\{\{x,x+i\} : x\in \mathbb{Z}_n\}$. It is straightforward
  to verify that $\{F_{i}\}_{i=1}^{\hf{n}}$ is a $2$-factorization of
  $K_{n}$. A triangle in $F_{1}\cup F_{2}$ is induced by the vertices
  $\{0,1,2\}$.
\end{proof}

\begin{lemma}\label{lem:1factor}
  If $4 \leq n\equiv 0$, there is a $1$-factorization of
  $K_{n}$, $K_{n}=\bigcup_{i=0}^{n-2} \tilde{F}_i$, such that
  $\tilde{F}_{0}\cup \tilde{F}_{1}\cup \tilde{F}_{2}$ contains a
  triangle.
\end{lemma}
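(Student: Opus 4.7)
The plan is to parallel Lemma~\ref{lem:2factor}, but using the classical round-robin construction instead of a purely cyclic one, since now $n$ itself is even rather than odd. I would single out one vertex of $K_{n}$, call it $\infty$, and label the remaining $n-1$ vertices by the elements of $\mathbb{Z}_{n-1}$ (which is the cyclic group of odd order since $n\equiv 0$). For each $i\in \mathbb{Z}_{n-1}$ I would define $\tilde{F}_{i}$ to be the spanning subgraph of $K_{n}$ with edge set
$$E(\tilde{F}_{i}) = \bigl\{\{\infty, i\}\bigr\} \,\cup\, \bigl\{\{i+j,\, i-j\} : 1\leq j \leq (n-2)/2\bigr\}.$$

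The main routine step is to verify that $\{\tilde{F}_{i}\}_{i=0}^{n-2}$ is indeed a $1$-factorization of $K_{n}$. The edge $\{\infty, i\}$ covers the vertex $\infty$ exactly once in $\tilde{F}_{i}$, and the pairs $\{i+j, i-j\}$ for $j=1,\ldots,(n-2)/2$ are pairwise disjoint and avoid $i$ because the $n-2$ values $i\pm j$ are distinct elements of $\mathbb{Z}_{n-1}\setminus\{i\}$; this uses that $n-1$ is odd, so $2j\not\equiv 0 \pmod{n-1}$ and the values $+j$ and $-j$ cannot coincide. Hence each $\tilde{F}_{i}$ is a perfect matching, and comparing the total edge count, $(n-1)\cdot(n/2) = |E(K_{n})|$, confirms that these matchings partition $E(K_{n})$.

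Once the factorization is in place, exhibiting the triangle is immediate. By construction $\{\infty, 0\}\in E(\tilde{F}_{0})$ and $\{\infty, 2\}\in E(\tilde{F}_{2})$, and taking $j=1$ in the definition of $\tilde{F}_{1}$ yields $\{1+1,\, 1-1\}=\{0,2\}\in E(\tilde{F}_{1})$. Therefore the vertex set $\{\infty, 0, 2\}$ induces a triangle in $\tilde{F}_{0}\cup \tilde{F}_{1}\cup \tilde{F}_{2}$, as required. There is no serious obstacle here; the only mildly delicate point is the verification that the round-robin really produces a $1$-factorization, and this rests entirely on $n-1$ being odd, which is guaranteed by the hypothesis $4\leq n\equiv 0$.
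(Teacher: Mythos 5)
Your construction is exactly the paper's: the paper also singles out one vertex (labelled $\ast$ rather than $\infty$), labels the rest by $\mathbb{Z}_{n-1}$, defines $E(\tilde{F}_{i})=\{\{\ast,i\}\}\cup\{\{i+k,i-k\}\}$, and exhibits the triangle on $\{\ast,0,2\}$. The proposal is correct and takes essentially the same approach, with somewhat more detail on why the round-robin yields a $1$-factorization than the paper's ``straightforward to verify.''
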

\begin{proof}
  Label one vertex of $K_{n}$ as $\ast$ and label the remaining
  vertices with the elements of $\mathbb{Z}_{n-1}$. For
  $i\in \mathbb{Z}_{n-1}$, define $\tilde{F}_{i}$ as the spanning
  subgraph of $K_{n}$ having edge set
  $E(\tilde{F}_{i})=\{\{\ast,i\}\}\cup \{\{i+k,i-k\} : k \in
  \{1,2,\ldots,\hh{n-2}\}\}$. It is straightforward to verify that
  $\{\tilde{F}_{i}\}_{i=0}^{n-2}$ is a $1$-factorization of $K_{n}$. A
  triangle in $\tilde{F}_{0}\cup \tilde{F}_{1}\cup \tilde{F}_{2}$ is
  induced by the vertices $\{\ast,0,2\}$.
\end{proof}

\begin{lemma}\label{lem:1factorBip}
  If $m\geq 3$ there is a $1$-factorization of $K_{m,m}$,
  $K_{m,m}=\bigcup_{i=0}^{m-1} \hat{F}_{i}$, such that
  $\hat{F}_{0}\cup \hat{F}_{1}\cup \hat{F}_{2}$
  contains a 4-cycle.
\end{lemma}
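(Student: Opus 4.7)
The plan is to give an explicit cyclic $1$-factorization of $K_{m,m}$ and then exhibit the desired 4-cycle by direct inspection. Label the two sides of the bipartition as $U=\{u_0,\ldots,u_{m-1}\}$ and $W=\{v_0,\ldots,v_{m-1}\}$, with indices read in $\mathbb{Z}_m$. For each $i\in\{0,1,\ldots,m-1\}$, define the spanning subgraph $\hat{F}_i$ with edge set
$$E(\hat{F}_i)=\{\{u_j,v_{j+i}\} : j\in\mathbb{Z}_m\}.$$

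Since $j\mapsto j+i$ is a bijection on $\mathbb{Z}_m$, each $\hat{F}_i$ saturates every vertex exactly once, so it is a perfect matching of $K_{m,m}$. Moreover, an arbitrary edge $\{u_j,v_k\}$ of $K_{m,m}$ belongs to $\hat{F}_{k-j}$ and to no other $\hat{F}_i$, so the family $\{\hat{F}_i\}_{i=0}^{m-1}$ partitions the edges of $K_{m,m}$ and therefore is a $1$-factorization.

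It remains to exhibit a 4-cycle inside $\hat{F}_0\cup\hat{F}_1\cup\hat{F}_2$. I would use the cycle $u_0\,v_0\,u_{m-1}\,v_1\,u_0$, whose four vertices are pairwise distinct because $m\ge 3$. Reading off the index differences: $\{u_0,v_0\}\in\hat{F}_0$; $\{u_{m-1},v_0\}=\{u_{m-1},v_{(m-1)+1}\}\in\hat{F}_1$; $\{u_{m-1},v_1\}=\{u_{m-1},v_{(m-1)+2}\}\in\hat{F}_2$; and $\{u_0,v_1\}=\{u_0,v_{0+1}\}\in\hat{F}_1$. Hence this 4-cycle lies entirely in $\hat{F}_0\cup\hat{F}_1\cup\hat{F}_2$.

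The only substantive point is the choice of the 4-cycle, since the naive guess $u_0\,v_0\,u_1\,v_1\,u_0$ fails for $m\ge 4$: the edge $\{u_1,v_0\}$ lives in $\hat{F}_{m-1}$, which is outside $\hat{F}_0\cup\hat{F}_1\cup\hat{F}_2$ unless $m\le 3$. Replacing $u_1$ by $u_{m-1}$ cures the problem because $0-(m-1)\equiv 1$ and $1-(m-1)\equiv 2$ in $\mathbb{Z}_m$, placing both edges incident to $u_{m-1}$ in $\hat{F}_1$ and $\hat{F}_2$ as needed.
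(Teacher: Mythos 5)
Your proof is correct and uses exactly the same cyclic $1$-factorization $E(\hat{F}_i)=\{u_jv_{j+i} : j\in\mathbb{Z}_m\}$ as the paper; the only difference is the choice of witness $4$-cycle (the paper takes the vertex set $\{x_0,y_1,x_1,y_2\}$, whose four edges lie in $\hat{F}_1,\hat{F}_0,\hat{F}_1,\hat{F}_2$, while you take $\{u_0,v_0,u_{m-1},v_1\}$). Both choices are valid, so your argument matches the paper's in substance.
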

\begin{proof}
  Let $\{X,Y\}$ be the bipartition of $K_{m,m}$. Label the vertices of
  $X$ with $\{x_{i} : i\in \mathbb{Z}_{m} \}$ and the vertices of $Y$
  with $\{y_{i} : i \in \mathbb{Z}_{m}\}$. Define $\hat{F}_i$ as
  the spanning subgraph of $K_{m,m}$ with edge set
  $E(\hat{F_{i}})=\{x_{j}y_{j+i}: j \in \mathbb{Z}_m\}$. It is
  straightforward to verify that $\{\hat{F}_{i}\}_{i=0}^{m-1}$ is a
  $1$-factorization of $K_{m,m}$. A $4$-cycle is induced in
  $\hat{F}_{0}\cup \hat{F}_{1}\cup \hat{F}_{2}$ by the
  vertices $\{x_{0},y_{1},x_{1},y_{2}\}$.
\end{proof}

\section{Existence  of weighted cages}\label{sec:exist}
Given two graphs $Z$, $Y$, a \emph{(weak) morphism},
$\varphi:Z\rightarrow Y$, is a function
$\varphi:V(Z)\rightarrow V(Y)$, such that $z\simeq_{Z} z'$ implies
$\varphi(z)\simeq_{Y} \varphi(z')$. Note that $\varphi$ may map adjacent
vertices into equal vertices. For $zz'\in E(Z)$ we define
$\varphi(zz') = \{\varphi(z),\varphi(z')\}$ which may be singleton or
an edge in $Y$. We also define
$\varphi^{-1}(y)=\{z\in V(Z) : \varphi(z) = y\}$ and
$\varphi^{-1}(yy')=\{zz' \in E(Z): \varphi(zz') = yy'\}$.

Recall that a wcycle is a cycle composed by light and heavy edges and
that its weight is the sum of the weights of its edges.  An
$(a,b,g)$-wcycle is a wcycle $C=(L,H)$ of weight $g$ such that, for
each $x\in V(C)$, $deg_{L}(x)\leq a$ and $deg_{H}(x)\leq b$. Hence, if
$C$ is an $(a,b,g)$-wcycle, then it is also an $(a',b',g)$-wcycle,
whenever $a'\geq a$ and $b'\geq b$. For instance, a cycle of length
$g$ composed only of light edges is an $(a,0,g)$-wcycle, for each
$a\geq 2$. Similarly, a cycle of length $\ell$ composed only of heavy
edges is a $(0,b,2\ell)$-wcycle, for each $b\geq 2$. Also, any wcycle
of weight $g$ is a $(2,2,g)$-wcycle, but there are no
$(a,b,g)$-wcycles when $a+b\leq 1$. Note that any $(a,b,g)$-wcage
contains at least one $(a,b,g)$-wcycle. 

We shall prove in this section that an $(a,b,g)$-wcage exists whenever an
$(a,b,g)$-wcycle exists.  The idea is very simple: Start by taking
such an $(a,b,g)$-wcycle, extend it to achieve the light-regularity and
then extend it again to achieve the heavy-regularity. The formal
details, however, require a series of lemmas. Let us begin by
characterizing the existence of $(a,b,g)$-wcycles:

\begin{lemma}\label{existence} Let $a,b\geq 0$ and $g\geq 3$, then
 an $(a,b,g)$-wcycle exists if and only if any of the conditions 1-4 holds: 
\begin{enumerate}
\item[1.] $a\geq 2$.
\item[2.] $a=1$, $b\geq 2$, and $g \geq 5$.
\item[3.] $a=1$, $b=1$, $g\geq 6$ and $g\equiv 0 \mod 3$.
\item[4.] $a=0$, $b\geq 2$, $g\geq 6$ and $g\equiv 0 \mod 2$.
\end{enumerate}
\end{lemma}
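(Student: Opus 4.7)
The approach rests on a simple structural observation: in any wcycle each vertex $x$ is incident to exactly two edges, so the pair $(d_{L}(x),d_{H}(x))$ --- call it the \emph{type} of $x$ --- must be one of $(2,0)$, $(1,1)$, or $(0,2)$. These three types are compatible with the upper bounds $a,b$ precisely when $a\geq 2$, when $\min(a,b)\geq 1$, and when $b\geq 2$, respectively. The whole lemma then reduces to a case analysis on which types are admissible.

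For necessity, I would split by $(a,b)$ according to which types survive. If $a=0$ every vertex has type $(0,2)$, every edge is heavy, and a cycle of length $\ell\geq 3$ has weight $2\ell\geq 6$; combined with $b\geq 2$ this is condition~4. If $a=1$ and $b=0$ no type is admissible and no wcycle exists. If $a=b=1$ only type $(1,1)$ survives, so light and heavy edges must alternate, forcing even length $2k\geq 4$ and weight $3k\geq 6$, yielding condition~3. If $a=1$ and $b\geq 2$ the type $(2,0)$ is forbidden, so light edges form a matching of the cycle; a short finite check over cycle lengths $3$ and $4$ rules out weights $3$ and $4$, so $g\geq 5$ as in condition~2. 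If $a\geq 2$ we fall under condition~1.

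For sufficiency, I would exhibit an explicit wcycle in each case. Take $C_{g}$ with all light edges for condition~1; $C_{g/2}$ with all heavy edges for condition~4; and $C_{2k}$ (with $g=3k$) with alternating light and heavy edges for condition~3. For condition~2 I would split on the parity of $g$: for even $g\geq 6$ use $C_{g/2}$ all heavy, and for odd $g\geq 5$ use a cycle of length $(g+1)/2\geq 3$ containing exactly one light edge and $(g-1)/2$ heavy edges, which has weight $g$, maximum light-degree $1$, and maximum heavy-degree $2$.

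The one mildly delicate point is the lower bound $g\geq 5$ in condition~2: namely, that banning type $(2,0)$ genuinely makes weights $3$ and $4$ unreachable. A weight-$3$ cycle must be a light triangle, which needs a type-$(2,0)$ vertex; a weight-$4$ cycle is either $C_{4}$ all-light or $C_{3}$ with two light and one heavy edge, and both again require a type-$(2,0)$ vertex. So the main obstacle is really just this bookkeeping rather than anything conceptual.
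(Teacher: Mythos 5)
Your proof is correct and follows essentially the same route as the paper: identical explicit constructions for sufficiency (all-light cycle, all-heavy cycle, alternating cycle, and the one-light-plus-heavy cycle for odd $g$), with the necessity direction organized via the degree-type observation that the paper leaves as ``straightforward to verify.'' Your vertex-type bookkeeping is a slightly more systematic write-up of the same argument, not a different approach.
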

\begin{proof}
  Case 1: A wcycle can be formed using only light edges.  Case 2: A
  wcycle can be formed either using only heavy edges (for even $g$,
  with $g\geq 6$) or using one light edge and $(g-1)/2$ heavy edges
  (for odd $g$, with $g\geq 5$).  Case 3: Any such wcycle must
  alternate light and heavy edges; any two such consecutive edges in
  the wcycle contribute 3 to the weight of the wcycle and hence
  $g\equiv 0 \mod 3$. Also, the minimum of such wcycles has 4 edges
  and $g=6$.  Case 4: Any wcycle must contain only heavy edges and
  hence $g\equiv 0 \mod 2$. Also the minimum of such wcycles has 3 
  edges and $g=6$.

 It is straightforward to verify that these are all the cases in which an $(a,b,g)$-wcycle exists. 
\end{proof}

\begin{defin}\label{semidirect}
Given graphs $X$ and $Y$ we say that $Z$ is a \emph{semidirect product} of
$X$ and $Y$ (written as $Z=X\rtimes Y$ or $Z=X\rtimes_{\varphi} Y$) whenever:
\begin{enumerate}
\item There is a morphism $\varphi:Z\rightarrow Y$
\item $\varphi^{-1}(y)\cong X$, for every $y\in V(Y)$.
\item $|\varphi^{-1}(y_{1}y_{2})|=1$, for every $y_{1}y_{2}\in E(Y)$.
\end{enumerate}
\end{defin}

Note that, given $Z=X\rtimes_{\varphi} Y$, we must have that $\varphi$
is vertex- and edge-surjective, that $|Z|=|X||Y|$, and that $Z$ is the
disjoint union of $|Y|$ copies of $X$ with some additional
\emph{external edges}, which only connect vertices from different
copies of $X$ in $Z$ and such that given two such copies $X_{1}$ and
$X_{2}$ of $X$ in $Z$, there is at most one external edge connecting a
vertex from $X_{1}$ to a vertex from $X_{2}$.

\begin{lemma}[Extension Lemma]\label{lem:ext}
Let $d\geq 0$ be an integer. Let $X$ be a graph with $\Delta(X)\leq d$.  Define
the \emph{defect} $D = d\cdot|X|-\sum_{x \in X} deg_X(x)$.  Let $Y$ be
a $D$-regular graph. Then there is a $d$-regular graph $Z$ with 
$Z=X\rtimes Y$. 
\end{lemma}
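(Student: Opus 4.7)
The plan is to realise $Z$ as a union of $|V(Y)|$ disjoint copies of $X$, one copy $X_y$ for each $y\in V(Y)$, together with one \emph{external} edge per edge of $Y$, inserted so that every vertex reaches degree exactly $d$. Writing a vertex of $X_y$ as a pair $(x,y)$, its internal degree inside $X_y$ is $deg_{X}(x)\leq d$, so it has what I will call a \emph{deficiency} of $d-deg_{X}(x)$. The key observation is that the deficiencies inside a single copy $X_y$ sum to $d\cdot|X|-\sum_{x}deg_{X}(x)=D$, which is exactly the number of neighbours of $y$ in $Y$, and hence exactly the number of external edges we must attach to $X_y$.

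Concretely, I would first choose, for each $y\in V(Y)$, an arbitrary \emph{distribution function} $f_{y}:N_{Y}(y)\rightarrow V(X)$ whose fibres have the prescribed sizes $|f_{y}^{-1}(x)|=d-deg_{X}(x)$; such an $f_y$ exists because the fibre sizes are non-negative integers summing to $|N_{Y}(y)|=D$. Then, for each edge $y_{1}y_{2}\in E(Y)$, I would add the single external edge joining $(f_{y_{1}}(y_{2}),y_{1})$ to $(f_{y_{2}}(y_{1}),y_{2})$, and define $\varphi:Z\rightarrow Y$ as the projection $(x,y)\mapsto y$.

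The verification is then routine. The map $\varphi$ is a morphism: internal edges of $X_y$ fall under the ``$y\simeq_{Y} y$'' case, while each external edge maps to its corresponding $y_{1}y_{2}\in E(Y)$. We have $\varphi^{-1}(y)=V(X_y)\cong X$ by construction, and $|\varphi^{-1}(y_{1}y_{2})|=1$ because we added exactly one external edge per edge of $Y$. Regularity follows from the deficiency count: the external degree of $(x,y)$ is $|f_{y}^{-1}(x)|=d-deg_{X}(x)$, so the total degree of $(x,y)$ is $d$. The only point requiring a brief sanity check, rather than a real obstacle, is that the external edges form a simple graph: two distinct edges of $Y$ produce external edges between distinct pairs of copies of $X$, so no loop or multi-edge can arise, and the semidirect-product structure is preserved.
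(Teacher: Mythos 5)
Your proof is correct, but it takes a genuinely different route from the paper's. Both arguments build the same underlying object --- $V(Z)=V(X)\times V(Y)$, an internal copy of $X$ over each $y\in V(Y)$, and exactly one external edge per edge of $Y$ --- but they differ in how the external edges are attached. The paper proceeds greedily: it scans the edges of $Y$, each time joining any two endpoints of degree less than $d$ in the relevant copies, and then needs a closing counting argument (by contradiction) to show that no edge of $Y$ can be left unused, whence $d$-regularity follows from the degree-sum identity $\sum_{z\in\varphi^{-1}(y)}deg_{Z}(z)=d\cdot|X|$. You instead prescribe the attachment points in advance via the distribution functions $f_{y}:N_{Y}(y)\rightarrow V(X)$ with fibre sizes $d-deg_{X}(x)$; the existence of such functions is immediate because the fibre sizes are non-negative (as $\Delta(X)\leq d$) and sum to $D=|N_{Y}(y)|$ (as $Y$ is $D$-regular), and then the external degree of $(x,y)$ is $|f_{y}^{-1}(x)|$ by construction, so regularity holds with no further argument. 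Your version buys a fully deterministic construction and eliminates the paper's final contradiction step; the paper's greedy version requires slightly less bookkeeping up front but must then prove that the greedy process cannot stall. Your checks of simplicity, of the morphism property of $\varphi$, and of conditions 2 and 3 of the semidirect-product definition are all in order, so the argument is complete.
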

\begin{proof} Let us construct $Z$ and $\varphi$ as follows. First
  take $V(Z)=V(X)\times V(Y)$. Define $\varphi: Z\rightarrow Y$ by
  $\varphi(x,y)=y$. Add the following edges to $Z$:
  $$\{(x,y)(x',y') : xx'\in E(X) \textup{ and } y=y' \}.$$ At this point, we
  already have $\varphi^{-1}(y)\cong X$, for every $y\in V(Y)$.

  Given an edge $yy' \in E(Y)$ select any pair of vertices
  $z=(x,y)\in \varphi^{-1}(y)$ and $z'=(x',y') \in \varphi^{-1}(y' )$
  satisfying $deg_{Z}(z)<d$ and $deg_{Z}(z') < d$ (if any). If the
  selection was possible, add the edge $zz'$ to $Z$ and mark
  the edge $yy'$ as \emph{used}. Repeat this procedure with the rest
  of the \emph{unused} edges of $Y$ until it is impossible to add more
  edges to $Z$. In this way, we just added at most one edge to $Z$ for
  each edge of $Y$. Note that $deg_{Z}(z)\leq d$ for all $z\in Z$.  We
  claim that $Z$ already possesses all the required properties.

  Assume first that all edges of $Y$ were used. Then to each copy
  $\varphi^{-1}(y)$ of $X$ (for any $y\in Y$), we just added
  $deg_{Y}(y)=D$ new \emph{external} edges (each ending in another
  copy of $X$). Then, recalling the definition of the defect $D$, the new degree sum of the all vertices
  $z=(x,y)$ in $\varphi^{-1}(y)$ is:
  \begin{equation}\label{degsum}
  \sum_{z\in \varphi^{-1}(y)} deg_{Z}(z) = \sum_{x \in X} deg_X(x) +  D = d\cdot|X|.\end{equation} Since $deg_{Z}(z)\leq d$ and $|\varphi^{-1}(y)|=|X|$, Equation~(\ref{degsum}) implies that $deg_{Z}(z)= d$, for all $z\in \varphi^{-1}(y)$. Since this happens for every $y$, it follows 
  that $Z$ is $d$-regular. It should be clear that all the conditions in
  Definition~\ref{semidirect} are satisfied.

  Finally, assume that some edge $yy'\in E(Y)$ could not be used. This
  means, without lost of generality, that every vertex
  $z\in\varphi^{-1}(y)$ already had degree $d$. But then
  $\sum_{z\in \varphi^{-1}(y)} deg_{Z}(z) =d\cdot|X| = \sum_{x \in X}
  deg_X(x)+D$, which mean that $D$ additional external edges were
  added during the procedure to the vertices of $\varphi^{-1}(y)$. But
  $deg_{Y}(y)=D$ and hence all edges incident with $y$ in $Y$ were
  used. Therefore $yy' $ was already used indeed. A contradiction.
\end{proof}

\begin{theorem}\label{construction}
  For $a,b\geq 0$, $g\geq 3$, an $(a,b,g)$-wcage exists if and only if
  an $(a,b,g)$-wcycle exists.
\end{theorem}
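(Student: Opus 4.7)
The necessity direction is essentially immediate: in an $(a,b,g)$-wcage $G=(L,H)$ a shortest wcycle $W$ has weight $g$, and at each vertex $x\in V(W)$ the light-degree in $W$ is bounded by $deg_{L}(x)=a$ and the heavy-degree in $W$ by $deg_{H}(x)=b$, so $W$ is an $(a,b,g)$-wcycle.

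For sufficiency, my plan mirrors the author's own outline. Starting from an $(a,b,g)$-wcycle $C_{0}=(L_{0},H_{0})$, I apply Lemma~\ref{lem:ext} twice to build the final light and heavy subgraphs in turn. In the first stage, set $d=a$ and $X=L_{0}$ (which satisfies $\Delta(X)\le a$ by the definition of an $(a,b,g)$-wcycle), and pick a $D_{1}$-regular graph $Y_{1}$ of girth strictly greater than $g$, where $D_{1}$ is the defect defined in Lemma~\ref{lem:ext}; such $Y_{1}$ is available by the Sachs theorem quoted in the introduction, with the degenerate cases $D_{1}\in\{0,1\}$ handled by a single vertex or a single edge. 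The lemma gives an $a$-regular graph $L_{1}=L_{0}\rtimes_{\varphi_{1}}Y_{1}$, and placing one copy of $H_{0}$ on each fibre $\varphi_{1}^{-1}(y)$ yields a wgraph $G_{1}=(L_{1},H_{1})$ with $\Delta(H_{1})\le b$. The second stage is analogous: with $d=b$, defect $D_{2}$, and a $D_{2}$-regular $Y_{2}$ of girth $>g$, Lemma~\ref{lem:ext} delivers $H=H_{1}\rtimes_{\varphi_{2}}Y_{2}$; on the enlarged vertex set I take $L$ to be one disjoint copy of $L_{1}$ per fibre of $\varphi_{2}$, which is still $a$-regular and edge-disjoint from $H$ because the new external edges of the second stage are all heavy.

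The substantive point, and where I expect the only real obstacle, is verifying $g(G)=g$. Since $C_{0}$ sits inside $G$, one direction is immediate, so it suffices to rule out wcycles of smaller weight. My plan is to count, for any wcycle $W$ in $G$, the number $m$ of external heavy edges from the second stage and the number $k$ of external light edges from the first stage that appear in $W$. Projecting $W$ to $Y_{2}$ and using the injectivity built into Definition~\ref{semidirect}, whenever $m\ge 1$ the $m$ external edges project to $m$ distinct edges of a closed walk in $Y_{2}$, which must then contain a cycle, forcing $m\ge g(Y_{2})>g$; the weight of $W$ is then at least $2m>g$. If $m=0$ then $W$ lies inside a single fibre $\varphi_{2}^{-1}(y')$, a copy of $G_{1}$, and the analogous projection into $Y_{1}$ gives $k\ge g(Y_{1})>g$ whenever $k\ge 1$. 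The only remaining case, $m=k=0$, forces $W$ into a single copy of $C_{0}$ where the unique wcycle has weight exactly $g$. An $(a,b,g)$-wgraph of minimum order obtained this way is then an $(a,b,g)$-wcage.
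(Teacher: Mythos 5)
Your proof is correct and follows essentially the same route as the paper: two successive applications of the Extension Lemma (first to achieve light-regularity, then heavy-regularity), with the girth controlled by choosing the auxiliary regular graphs of large enough girth and projecting any purported short wcycle onto them via the edge-injectivity in Definition~\ref{semidirect}. The only deviation is that you require girth $>g$ for the second-stage auxiliary graph where the paper only needs girth $\geq\lceil g/2\rceil$ (those external edges being heavy), but this affects only the size of the construction, not the existence claim.
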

\begin{proof}
  It suffices to show that an $(a,b,g)$-wgraph exists. 
  Figure~\ref{fig:construction} illustrates this proof. 
  Let $G_{0}$ be an $(a,b,g)$-wcycle, so $g(G_{0})=g$. 

\begin{figure}[h]
\begin{center}
\begin{tikzpicture}

  \begin{scope}[scale=0.7]

   \node at (1.5,4) {$G_0$};  
   \node [st] (v1) at (1,2) {};
   \node [st] (v2) at (2,2) {};
   \node [st] (v3) at (1.5,3) {};
   \draw [thick,black] (v1)--(v2);
   \draw [very thick,red] (v1)--(v3)--(v2);

   \node at (5.5,4) {$X_0$};  
   \node [st] (v4) at (5,2) {};
   \node [st] (v5) at (6,2) {};
   \node [st] (v6) at (5.5,3) {};
   \draw [thick,black] (v4)--(v5);   
   
   \node at (9.5,4) {$Y_0$};     
   \node [stt] (v7) at (9,2.5) {};
   \node [stt] (v8) at (10,2.5) {};
   \draw [thick,black] (v7)--(v8);  
   \node at (9.5,1.5) {$D=1$};    
   \node at (9.5,1) {$g(Y_0)\geq 5$};   
  \end{scope}

\end{tikzpicture}\vspace{0.4cm}

\begin{tikzpicture}
  \begin{scope}[scale=0.7]

   \node at (1.5,4) {$Z_1$}; 
   \draw[teal!100] (0.5,2.5) circle (25pt);
   \node [st] (v1) at (0,2) {};
   \node [st] (v2) at (1,2) {};
   \node [st] (v3) at (0.5,3) {};
   \draw [thick,black] (v1)--(v2);
   \draw[teal!100] (2.5,2.5) circle (25pt);   
   \node [st] (v4) at (2,2) {};
   \node [st] (v5) at (3,2) {};
   \node [st] (v6) at (2.5,3) {};
   \draw [thick,black] (v4)--(v5);
   \draw [thick,black] (v3)--(v6);
   
   \node at (5.75,4) {$G_1$};   
   \node [st] (v7) at (4.5,2) {};
   \node [st] (v8) at (5.5,2) {};
   \node [st] (v9) at (5,3) {};
   \draw [very thick,red] (v7)--(v9)--(v8);   
   \draw [thick,black] (v7)--(v8);  
   \node [st] (v10) at (6,2) {};
   \node [st] (v11) at (7,2) {};
   \node [st] (v12) at (6.5,3) {};
   \draw [very thick,red] (v10)--(v12)--(v11);    
   \draw [thick,black] (v10)--(v11);
   \draw [thick,black] (v9)--(v12);  
   
   \node at (9.25,4) {$X_1$};     
   \node [st] (v13) at (8,2) {};
   \node [st] (v14) at (9,2) {};
   \node [st] (v15) at (8.5,3) {};
   \draw [very thick,red] (v13)--(v15)--(v14);     
   \node [st] (v16) at (9.5,2) {};
   \node [st] (v17) at (10.5,2) {};
   \node [st] (v18) at (10,3) {};
   \draw [very thick,red] (v16)--(v18)--(v17);   
   \node at (13,0.5) {$D=4$};    
   \node at (13,0) {$g(Y_1)\geq 3$};
   
   \node at (13,4) {$Y_1$};  
   \node [stt] (v19) at (13,3.31) {};   
   \node [stt] (v20) at (14.21,2.43) {}; 
   \node [stt] (v21) at (13.75,1) {}; 
   \node [stt] (v22) at (12.25,1) {};   
   \node [stt] (v23) at (11.79,2.43) {}; 
   \draw [thick,black] (v19)--(v20)--(v21)--(v22)--(v23)--(v19)--(v21)--(v23)--(v20)--(v22)--(v19);   
  \end{scope}
\end{tikzpicture}
\begin{tikzpicture}
\begin{scope}[scale=1.5]
   \node at (0.1,1.3) {$Z_2$}; 
   \node [st] (v1) at (1.25,2.55) {};
   \node [st] (v2) at (1.45,2.55) {};
   \node [st] (v3) at (1.35,2.75) {};
   \draw [very thick,red] (v1)--(v3)--(v2);     
   \node [st] (v4) at (1.55,2.55) {};
   \node [st] (v5) at (1.75,2.55) {};
   \node [st] (v6) at (1.65,2.75) {};
   \draw [very thick,red] (v4)--(v6)--(v5);  
   \draw[teal!100] (1.5,2.65) circle (12pt);   

   \node [st] (v7) at (2.24,2.19) {};
   \node [st] (v8) at (2.3,2) {};
   \node [st] (v9) at (2.46,2.15) {};
   \draw [very thick,red] (v7)--(v9)--(v8);     
   \node [st] (v10) at (2.33,1.9) {};
   \node [st] (v11) at (2.39,1.71) {};
   \node [st] (v12) at (2.55,1.87) {};
   \draw [very thick,red] (v10)--(v12)--(v11); 
   \draw[teal!100] (2.37,1.97) circle (12pt);   
   
   \node [st] (v13) at (2.2,1.14) {};
   \node [st] (v14) at (2.04,1.02) {};
   \node [st] (v15) at (2.23,0.92) {};
   \draw [very thick,red] (v13)--(v15)--(v14);     
   \node [st] (v16) at (1.95,0.96) {};
   \node [st] (v17) at (1.79,0.84) {};
   \node [st] (v18) at (1.99,0.74) {};
   \draw [very thick,red] (v16)--(v18)--(v17); 
   \draw[teal!100] (2.06,0.9) circle (12pt);   
   
   \node [st] (v19) at (1.19,0.85) {};
   \node [st] (v20) at (1.02,0.97) {};
   \node [st] (v21) at (0.99,0.75) {};
   \draw [very thick,red] (v19)--(v21)--(v20);     
   \node [st] (v22) at (0.94,1.03) {};
   \node [st] (v23) at (0.78,1.14) {};
   \node [st] (v24) at (0.74,0.92) {};
   \draw [very thick,red] (v22)--(v24)--(v23); 
   \draw[teal!100] (0.9,0.9) circle (12pt); 
      
   \node [st] (v25) at (0.6,1.72) {};
   \node [st] (v26) at (0.66,1.91) {};
   \node [st] (v27) at (0.44,1.88) {};
   \draw [very thick,red] (v25)--(v27)--(v26);     
   \node [st] (v28) at (0.69,2.01) {};
   \node [st] (v29) at (0.75,2.2) {};
   \node [st] (v30) at (0.53,2.17) {};
   \draw [very thick,red] (v28)--(v30)--(v29); 
   \draw[teal!100] (0.6,1.97) circle (12pt);          
   
   \draw [very thick,red] (v1)--(v29);
   \draw [very thick,red] (v2)--(v22);       
   \draw [very thick,red] (v4)--(v14);
   \draw [very thick,red] (v5)--(v7);    
   \draw [very thick,red] (v8)--(v28);
   \draw [very thick,red] (v10)--(v20); 
   \draw [very thick,red] (v11)--(v13);
   \draw [very thick,red] (v16)--(v26); 
   \draw [very thick,red] (v17)--(v19);
   \draw [very thick,red] (v23)--(v25); 

   \node at (4.2,1.3) {$G_2$};   
   \node [st] (v31) at (5.25,2.53) {};
   \node [st] (v32) at (5.45,2.53) {};
   \node [st] (v33) at (5.35,2.73) {};
   \draw [very thick,red] (v31)--(v33)--(v32);     
   \node [st] (v34) at (5.55,2.53) {};
   \node [st] (v35) at (5.75,2.53) {};
   \node [st] (v36) at (5.65,2.73) {};
   \draw [very thick,red] (v34)--(v36)--(v35);  

   \node [st] (v37) at (6.24,2.17) {};
   \node [st] (v38) at (6.3,1.97) {};
   \node [st] (v39) at (6.46,2.13) {};
   \draw [very thick,red] (v37)--(v39)--(v38);     
   \node [st] (v40) at (6.33,1.88) {};
   \node [st] (v41) at (6.4,1.69) {};
   \node [st] (v42) at (6.55,1.85) {};
   \draw [very thick,red] (v40)--(v42)--(v41); 
   
   \node [st] (v43) at (6.2,1.11) {};
   \node [st] (v44) at (6.04,1) {};
   \node [st] (v45) at (6.24,0.89) {};
   \draw [very thick,red] (v43)--(v45)--(v44);     
   \node [st] (v46) at (5.96,0.94) {};
   \node [st] (v47) at (5.8,0.82) {};
   \node [st] (v48) at (5.99,0.72) {};
   \draw [very thick,red] (v46)--(v48)--(v47); 
   
   \node [st] (v49) at (5.19,0.83) {};
   \node [st] (v50) at (5.03,0.94) {};
   \node [st] (v51) at (4.99,0.72) {};
   \draw [very thick,red] (v49)--(v51)--(v50);     
   \node [st] (v52) at (4.95,1) {};
   \node [st] (v53) at (4.78,1.12) {};
   \node [st] (v54) at (4.75,0.9) {};
   \draw [very thick,red] (v52)--(v54)--(v53); 
   
   \node [st] (v55) at (4.6,1.7) {};
   \node [st] (v56) at (4.67,1.89) {};
   \node [st] (v57) at (4.44,1.86) {};
   \draw [very thick,red] (v55)--(v57)--(v56);     
   \node [st] (v58) at (4.7,1.99) {};
   \node [st] (v59) at (4.76,2.18) {};
   \node [st] (v60) at (4.54,2.14) {};
   \draw [very thick,red] (v58)--(v60)--(v59);   
   
   \draw [very thick,red] (v31)--(v59);
   \draw [very thick,red] (v32)--(v52);       
   \draw [very thick,red] (v34)--(v44);
   \draw [very thick,red] (v35)--(v37);    
   \draw [very thick,red] (v38)--(v58);
   \draw [very thick,red] (v40)--(v50); 
   \draw [very thick,red] (v41)--(v43);
   \draw [very thick,red] (v46)--(v56); 
   \draw [very thick,red] (v47)--(v49);
   \draw [very thick,red] (v53)--(v55);   
   
   \draw [thick,black] (v31)--(v32);
   \draw [thick,black] (v34)--(v35);
   \draw [thick,black] (v33)--(v36);
   \draw [thick,black] (v37)--(v38);
   \draw [thick,black] (v40)--(v41);
   \draw [thick,black] (v39)--(v42);   
   \draw [thick,black] (v43)--(v44);
   \draw [thick,black] (v46)--(v47);
   \draw [thick,black] (v45)--(v48);
   \draw [thick,black] (v49)--(v50);
   \draw [thick,black] (v52)--(v53);
   \draw [thick,black] (v51)--(v54);
   \draw [thick,black] (v55)--(v56);
   \draw [thick,black] (v58)--(v59);
   \draw [thick,black] (v57)--(v60);
   \end{scope}
\end{tikzpicture}
\end{center}
    \caption{Construction in the proof of Theorem~\ref{construction}.}
    \label{fig:construction}
\end{figure}
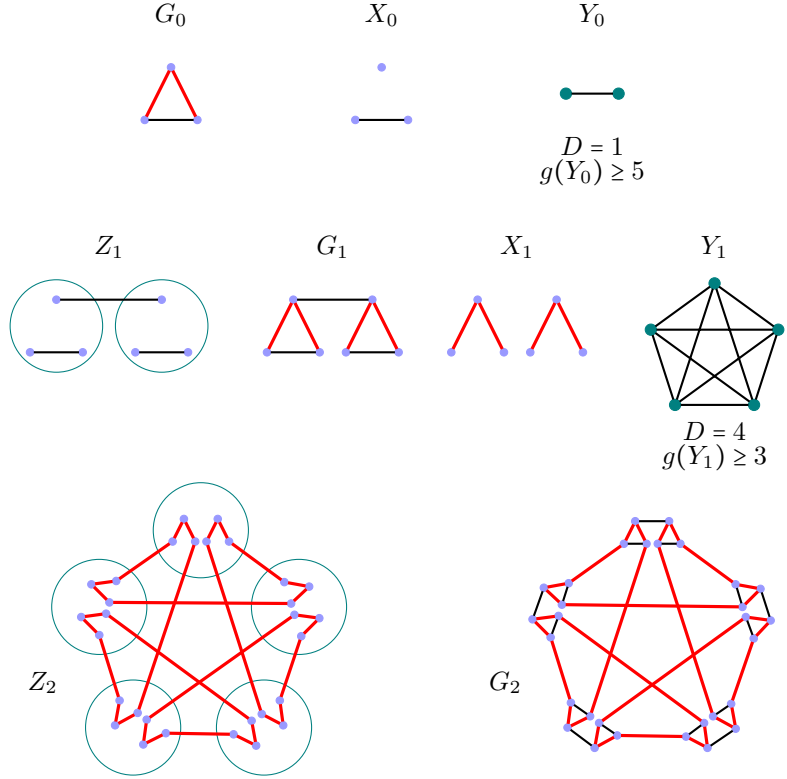

Let $X_{0}=L(G_{0})$, the light-subgraph of $G_0$. Note that $g(X_{0})\geq g$
and $\Delta(X_{0})\leq a$. Take $d=a$ and compute the defect
$D = d\cdot|X_{0}|-\sum_{x \in X_{0}} deg_{X_{0}}(x)$ as in the
Extension Lemma (\ref{lem:ext}). Now, select any $D$-regular graph
$Y_{0}$ of girth $g(Y_{0})\geq g$, for instance: for $D=0$ we may take
$Y_{0}=K_{1}$; for $D=1$ take $Y_{0}=K_{2}$; and for $D\geq 2$, we may
take $Y_{0}$ as any $(D,g)$-cage.

Now we use the Extension Lemma with $d=a$, to get an $a$-regular graph
$Z_{1}=X_{0}\rtimes_{\varphi}Y_{0}$ for some
$\varphi:Z_{1}\rightarrow Y_{0}$. Recall that $Z_{1}$ is the disjoint
union of $|Y_{0}|$ copies of $X_{0}$, with some additional
external edges. Now, in each of these copies of $X_{0}$ in $Z_{1}$ put
back the heavy edges originally present in $G_0$ (if any) to obtain
$G_{1}$ (i.e. $Z_{1}$ is a spanning subgraph of $G_{1}$).

We claim that $g(G_1)=g$. First note that the original wcycle $G_{0}$
is present in $G_{1}$, indeed, each copy of $X_{0}$ in $Z_{1}$ induce
a copy of $G_{0}$ in $G_{1}$. Hence $g(G_{1})\leq g$. But, if there
was a wcycle $C$ in $G_{1}$ of weight $g' < g$, then, this wcycle
must use external edges of $Z_{1}$ and, since $Z_{1}=X_{0}\rtimes_{\varphi}Y_{0}$ (see
Definition~\ref{semidirect}), $\varphi(C)$ must be a closed walk in
$Y_{0}$ which contain a wcycle $C'$ in $Y_{0}$ of weight
$g(C')\leq g(C) =g' < g$ which implies $g(Y_{0})<g$, a contradiction. It follows that $g(G_{1})=g$.

We now repeat the extension procedure for the heavy edges of $G_{1}$ to attain the desired heavy regularity:  

Let $X_{1}=H(G_{1})$, the heavy-subgraph of $G_{1}$. Take $d=b$ and compute the
defect $D = d\cdot|X_{1}|-\sum_{x \in X_{1}} deg_{X_{1}}(x)$.  Select
any $D$-regular graph $Y_{1}$ of girth
$g(Y_{1})\geq \hc{g}$. Note that this time
$g(Y_{1})\geq \hc{g}$ is enough since these edges are
going to be the heavy edges of the final graph. Now use the Extension
Lemma to get a $b$-regular graph $Z_{2}=X_{1}\rtimes Y_{1}$. In each
copy of $X_{1}$ in $Z_{2}$ put back the light edges originally present
in $G_{1}$ to obtain $G_{2}$ (i.e. $Z_{2}$ is a spanning subgraph of
$G_{2}$). It should be clear as before, that $g(G_{2})=g$ and that
$G_{2}$ is $(a,b)$-regular.
\end{proof}

The general construction in the previous theorem gives bad
general upper bounds. In several cases, we can get better
upper bounds using the same ideas as shown in the
Theorem~\ref{generalbounds}. 

The order of an $(r,g)$-cage, $n(r,g)$, is finite for $r\geq 2$ and
$g\geq 3$, but for the constructions used in
Theorem~\ref{generalbounds} we shall need this variant, $\tn(r,g)$, of
$n(r,g)$ which is finite for $r\geq 0$, $g \geq 2$:
$$\tn(r,g)=\begin{cases}
n(r,g) & \textup{if } r\geq 2, g\geq 3,\\
r+1 & \textup{if } 0\leq r \leq 1\textup{ or }g = 2.
\end{cases}$$

This function, $\tn(r,g)$, is the order of the smallest $r$-regular
graph $X$ of girth $g(X)\geq g$. It coincides with $n(r,g)$ when an
$(r,g)$-cage exists (i.e. when $r\geq 2$ and $g\geq 3$), otherwise $X$
is a complete graph on $r+1$ vertices and the girth of $X$ is either
$\infty$ (no cycles) or $3$.

\begin{theorem}\label{generalbounds}
In the indicated cases, the following upper bounds hold.\medskip

\begin{tabular}{ll}
1. $n(a,b,g) \leq n(a,g)\cdot \tn(b\cdot n(a,g),\hc{g})$ & for $a\geq 2$, $g\geq 3$.\\
2. $n(a,b,g) \leq n(b,\hh{g})\cdot \tn(a\cdot n(b,\hh{g}),g)$ & for $b\geq 2$, $g\geq 6$ , $g$ even. \\
3. $n(a,b,g) \leq 2\cdot n(a,g)$ & for $a \geq 2$, $b=1$, $g\leq 6$.\\ 
4. $n(a,b,g) \leq 2\cdot \tn(b,3)$ & for $a=1$, $b \geq 1$, $g= 6$. 
\end{tabular}
\end{theorem}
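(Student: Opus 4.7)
The plan is to prove each of the four bounds by giving an explicit construction, using the Extension Lemma and semidirect products from the proof of Theorem~\ref{construction} but with smarter choices for the base graph $X$ and the extender $Y$.

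For Case~1 I take $X$ to be an $(a,g)$-cage, so $|X|=n(a,g)$ and $X$ is $a$-regular with girth $g$; I then apply Lemma~\ref{lem:ext} with target degree $d=a+b$, giving defect $D=b\cdot n(a,g)$, and pick $Y$ to be a $D$-regular graph of girth at least $\hc{g}$ on $\tn(b\cdot n(a,g),\hc{g})$ vertices (which exists by the definition of $\tn$). The resulting $Z=X\rtimes Y$ is $(a+b)$-regular on $n(a,g)\cdot\tn(b\cdot n(a,g),\hc{g})$ vertices; declaring each copy of $X$'s edges to be light and each external edge to be heavy produces an $(a,b)$-regular wgraph. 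Case~2 is dual: take $X$ to be a $(b,\hh{g})$-cage, compute the defect $D=a\cdot n(b,\hh{g})$, pick $Y$ to be a $D$-regular graph of girth at least $g$ on $\tn(a\cdot n(b,\hh{g}),g)$ vertices, and then declare cage edges heavy and external edges light.

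The main obstacle in Cases~1 and~2 is showing that the resulting girth equals $g$. A wcycle entirely inside one copy of $X$ has weight at least $g$ from the cage's girth (light girth $g$ in Case~1, heavy girth $\hh{g}$ giving weight $g$ in Case~2). For any wcycle $C$ that uses at least one external edge I project to $Y$ via $\varphi$; by Definition~\ref{semidirect}(3), two distinct external edges of $Z$ always project to distinct edges of $Y$, so two consecutive external edges along $C$ cannot project to the same edge of $Y$ (otherwise $C$ would repeat that edge), making $\varphi(C)$ a non-backtracking closed walk in $Y$. The auxiliary fact I will need is that every non-backtracking closed walk in a simple graph has length at least the girth: a closed walk shorter than $g(Y)$ uses fewer than $g(Y)$ edges, so its edge set can contain no cycle, hence lies in a forest, but a forest admits no non-backtracking closed walk of positive length. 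This gives at least $\hc{g}$ external heavy edges in Case~1, contributing weight at least $2\hc{g}\geq g$, and at least $g$ external light edges in Case~2, contributing weight at least $g$.

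Cases~3 and~4 use small, direct constructions. For Case~3, take two disjoint copies of an $(a,g)$-cage and add a perfect matching of heavy edges between them to obtain an $(a,1)$-regular wgraph on $2n(a,g)$ vertices. An all-light wcycle sits inside one copy and has weight at least $g$; a wcycle using $2h\geq 2$ heavy edges must traverse, between each pair of consecutive heavy edges, a light path of length at least $1$ (since the heavy subgraph is a matching and thus consecutive heavy edges share no vertex), yielding total weight at least $4h+2h=6h\geq g$ for $g\leq 6$. For Case~4, take two disjoint copies of a $b$-regular graph of girth at least $3$ on $\tn(b,3)$ vertices (namely $K_{b+1}$ if $b\geq 2$ and $K_2$ if $b=1$), declare all their edges heavy, and join them with a perfect matching of light edges. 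An all-heavy wcycle lies in one copy of length at least $3$, giving weight at least $6=g$; a wcycle using $2h\geq 2$ light edges needs at least $2h$ heavy edges to connect them, giving weight at least $2h+4h=6h\geq 6=g$. In each construction a witness wcycle of weight exactly $g$ is easy to exhibit, so the girth is exactly $g$.
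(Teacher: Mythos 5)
Your proof is correct and follows essentially the same route as the paper's: cases 1 and 2 extend a cage via the Extension Lemma (you apply it once with target degree $a+b$ to the cage itself, while the paper applies it with $d=b$ to the cage's edgeless heavy subgraph --- the resulting wgraphs are identical), and cases 3 and 4 use the same two-copies-plus-matching constructions. Your non-backtracking closed-walk argument for the girth lower bound in cases 1 and 2 is just a more explicit version of the projection argument the paper borrows from the proof of Theorem~\ref{construction}.
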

\begin{proof}
We shall use the Extension Lemma~\ref{lem:ext} and ideas similar to those in the
proof of Theorem~\ref{construction}. But in order to get better
bounds, whenever possible (cases 1, 2 and 3), we start with a cage and
not just with a wcycle. In this way, we can guarantee the girth and one of
the regularities, and then we obtain the desired wcage by using only
one extension operation. In the last case, the girth is guaranteed not
by the initial graph but by the construction itself.

(1) Let $G_{0}$ be an $(a,g)$-cage. Since $a\geq 2$ and $g\geq 3$,
$G_0$ does exist.  In addition, $g(G_0)=g$, this guarantees the girth
of the wgraph that will be constructed.
  
Let $X_{0}=H(G_{0})$, the heavy-subgraph of $G_0$, which is a discrete
graph. Take $d=b$ and then the defect $D = b\cdot|X_{0}|-0$ as in the
Extension Lemma. Now, select $Y_{0}$ to be a $D$-regular graph with
girth $g(Y_{0})\geq \hc{g}$ and minimal order
$|Y_{0}|=\tn(D,\hc{g})$. Use the Extension Lemma to get a $b$-regular
graph $Z_{1}=X_{0}\rtimes_{\varphi}Y_{0}$ as in the
Theorem~\ref{construction}.

Now consider the edges of $Z_{1}$ to be heavy edges and, in each copy
of $X_{0}$, put back the light edges originally present in $G_0$.  Let
us name the resulting wgraph as $G_{1}$. Since $g(G_0)=g$, as in the
proof of Theorem~\ref{construction}, we have that $g(G_1)=g$.
Furthermore, each vertex of $G_1$ is incident with $a$ light and $b$
heavy edges.  Hence, $G_1$ is an $(a,b,g)$-wgraph of order
$n(a,g)\cdot \tn(D,\hc{g})=n(a,g)\cdot \tn(b\cdot
n(a,g),\hc{g})$.

(2) Let $G_0$ be a $(b,\hh{g})$-cage. Since $b\geq 2$ and $g\geq 6$,
$G_{0}$ does exist. The edges of $G_{0}$ will produce the heavy edges
of the constructed wgraph. Let $D = a\cdot |G_0|$ and let $Y_{0}$ be a
$D$-regular graph with girth $g(Y_{0})\geq g$ and minimal order
$|Y_{0}|=\tn(D,g)$.  $Y_{0}$ is the light-subgraph of the sought graph. As
before, we can construct $G_1$, an $(a,b,g)$-wgraph of order
$n(b,\hh{g})\cdot \tn(a\cdot n(b,\hh{g}),g)$.

(3) Construct an $(a,g)$-cage with weight 1 on its edges, take two
copies and complete it with a matching of heavy edges. We claim that
no wcycles with a weight less than 6 are formed, since the new wcycles
contain at least two heavy edges of the matching, and the rest are
light ones, which are also at least two, therefore, the new wcycles have
weight at least 6.

(4) Construct a $b$-regular graph of girth at least 3 and then
consider its edges to be heavy. Take two disjoint copies of that, and complete
it with a matching of light edges, taking care that at least one wcycle
of weight 6 is formed.  As before, no wcycles of weight less than 6 are
formed.
\end{proof}

\section{Moore-like bounds}\label{sec:mbounds}

Much in the way of Moore's lower bounds for cages \cite{HS60}, we may
also provide lower bounds for wcages.  As in the classic case, we
construct a wtree which must be an induced subwgraph of any wcage
of some given parameters, and where all the vertices must be different
to avoid creating wcycles of weight less than $g$. The result is
Theorem~\ref{thm:MBounds} and this section is devoted to prove it.

\textbf{Assume first that $g$ is odd}. Start with a root vertex and
create 
a wtree of depth (wdistance from the root)
$h=\lfloor(g-1)/2\rfloor=(g-1)/2$ whose inner vertices have $a$ and $b$ light incident edges and heavy incident edges, respectively. All of these vertices must be different since, otherwise we would form a wcycle of weight at most
$2h=g-1<g$, which are forbidden. Any $(a,b,g)$-wcage must contain this
wtree as an induced subwgraph, and hence the order of the wtree is a
lower bound for $n(a,b,g)$.

Since we have light and heavy edges, we should create the several
levels of the wtree, considering the wdistance of the vertices from
the root (the root is at level 0), and hence heavy edges skip two
levels at a time as in Figure~\ref{fig:tree229}. We shall consider two
kinds of vertices, the light vertices (in green) and the heavy
vertices (in red): Vertices are light or heavy depending on the weight
of the edge connecting them to their respective parents. The root
vertex is not of any of these kinds, but we shall see that, for
counting purposes, it can be considered light or heavy depending on
the case at hand.

\begin{figure}[h]
\begin{center}
\begin{tikzpicture}[scale = 0.6]
\node[stt] (1) at (0,0) {};
\node[stt,red] (2) at (1,0) {};
\node[stt,red] (3) at (2,0) {};
\node[stt] (4) at (3,0) {};
\node[stt] (5) at (4,0) {};
\node[stt] (6) at (5,0) {};
\node[stt] (7) at (6,0) {};
\node[stt] (8) at (7,0) {};
\node[stt,red] (9) at (8,0) {};
\node[stt,red] (10) at (9,0) {};
\node[stt] (11) at (10,0) {};
\node[stt] (12) at (11,0) {};
\node[stt] (13) at (12,0) {};
\node[stt] (14) at (13,0) {};
\node[stt] (15) at (14,0) {};
\node[stt] (16) at (15,0) {};
\node[stt,red] (17) at (16,0) {};
\node[stt] (18) at (17,0) {};
\node[stt] (19) at (18,0) {};
\node[stt,red] (20) at (19,0) {};
\node[stt] (21) at (0,1.5) {};
\node[stt,red] (22) at (3.5,1.5) {};
\node[stt,red] (23) at (5.5,1.5) {};
\node[stt] (24) at (7,1.5) {};
\node[stt,red] (25) at (10.5,1.5) {};
\node[stt,red] (26) at (12.5,1.5) {};
\node[stt] (27) at (14,1.5) {};
\node[stt] (28) at (15,1.5) {};
\node[stt] (29) at (17,1.5) {};
\node[stt] (30) at (18,1.5) {};
\node[stt] (31) at (1,3) {};
\node[stt] (32) at (8,3) {};
\node[stt,red] (33) at (15,3) {};
\node[stt,red] (34) at (18,3) {};
\node[stt] (35) at (3.5,4.5) {};
\node[stt] (36) at (10.5,4.5) {};
\node[stt,black] (37) at (10.5,6) {};
\draw[thick] (1)--(21)--(31)--(35)--(37)--(36)--(32)--(24)--(8);
\draw[thick] (4)--(22)--(5);
\draw[thick] (6)--(23)--(7);
\draw[thick] (11)--(25)--(12);
\draw[thick] (13)--(26)--(14);
\draw[thick] (15)--(27)--(33)--(28)--(16);
\draw[thick] (18)--(29)--(34)--(30)--(19);
\draw[very thick,red] (2)--(31)--(3);
\draw[very thick,red] (22)--(35)--(23);
\draw[very thick,red] (9)--(32)--(10);
\draw[very thick,red] (25)--(36)--(26);
\draw[very thick,red] (17)--(33)--(37)--(34)--(20);
\end{tikzpicture}
\end{center}
\caption{Moore-like wtree for $a=2, b=2, g=9$.}
\label{fig:tree229}
\end{figure}
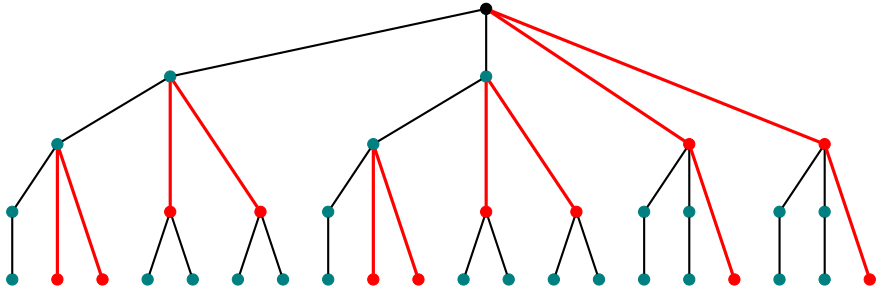

We define $L_{i}$ as the number of light vertices at level $i$ and
$H_{i}$ as the number of heavy vertices at level $i$. Then it should
be clear that the recurrences for light and heavy vertices at level
$i$ are:

\begin{equation}\label{eq:recurrence}
\begin{array}{rcl}
L_{i}&=&(a-1)L_{i-1}+aH_{i-1},\\
H_{i}&=&(b-1)H_{i-2}+bL_{i-2}.
\end{array}
\end{equation}

And that, the base cases are:

\begin{equation}\label{eq:baseoddgirth}
\begin{array}{ll}
L_{0}=1, &H_{0}=0,\\
L_{1}=a,&H_{1}=0.
\end{array}
\end{equation}

Note that in this case the root vertex is considered light ($L_{0}=1$), since it
affects the number of heavy vertices at level 2 according to the
recurrence for $H_{i}$ in (\ref{eq:recurrence}), but it does not affect the light
vertices at level 1 since those are determined by the base cases in (\ref{eq:baseoddgirth}) and
not by the recurrences. 

For future reference, let us name this lower bound.
\begin{equation}\label{eq:m1}
M_{1}:= M_{1}(a,b,g):=\sum_{i=0}^{(g-1)/2} (L_{i}+H_{i}) 
\textup{\hspace{0.5cm}using (\ref{eq:recurrence}) and (\ref{eq:baseoddgirth}), $g$ is odd.}
\end{equation}

\textbf{Now assume $g$ is even}. As before we can construct 
a wtree, and again, its depth must be at most
$h=\lfloor(g-1)/2\rfloor=(g-2)/2$ to guarantee that all of the
vertices are different (assuming there are no wcycles of weight less
than $g$). Although we can not add an additional full level preserving
this guarantee, we can indeed add an additional level but only to the
subwtree of one of the children of the root and still guarantee that
all the vertices are different, this is true since
$h+(h+1) = (g-2)/2+(g-2)/2+1 =g-1<g$. Since we have light and heavy
edges, this can be done in two different ways as shown in
figures~\ref{fig:tree228}(a) and \ref{fig:tree228}(b). There, the
child of the root selected to have an additional level of descendants
is marked with a square box. Any $(a,b,g)$-wcage must contain both of
these wtrees as induced subwgraphs and hence the orders of these wtrees
are both lower bounds for $n(a,b,g)$.

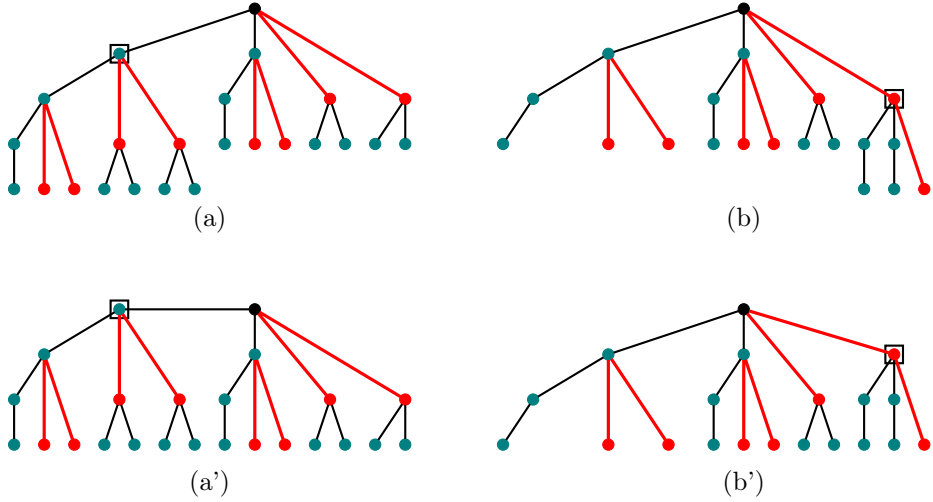
\begin{figure}[h]
\begin{center}
\begin{tikzpicture}
\begin{scope}[scale=0.4] 
\node[] () at (6.5,-1) {(a)};
\node[stt] (1) at (0,0) {};
\node[stt,red] (2) at (1,0) {};
\node[stt,red] (3) at (2,0) {};
\node[stt] (4) at (3,0) {};
\node[stt] (5) at (4,0) {};
\node[stt] (6) at (5,0) {};
\node[stt] (7) at (6,0) {};
\node[stt] (8) at (0,1.5) {};
\node[stt,red] (9) at (3.5,1.5) {};
\node[stt,red] (10) at (5.5,1.5) {};
\node[stt] (11) at (7,1.5) {};
\node[stt,red] (12) at (8,1.5) {};
\node[stt,red] (13) at (9,1.5) {};
\node[stt] (14) at (10,1.5) {};
\node[stt] (15) at (11,1.5) {};
\node[stt] (16) at (12,1.5) {};
\node[stt] (17) at (13,1.5) {};
\node[stt] (18) at (1,3) {};
\node[stt] (19) at (7,3) {};
\node[stt,red] (20) at (10.5,3) {};
\node[stt,red] (21) at (13,3) {};
\node[draw,thick] () at (3.5,4.5) {};
\node[stt] (22) at (3.5,4.5) {};
\node[stt] (23) at (8,4.5) {};
\node[stt,black] (24) at (8,6) {};
\draw[thick] (1)--(8)--(18)--(22)--(24)--(23)--(19)--(11);
\draw[thick] (4)--(9)--(5);
\draw[thick] (6)--(10)--(7);
\draw[thick] (14)--(20)--(15);
\draw[thick] (16)--(21)--(17);
\draw[very thick,red] (2)--(18)--(3);
\draw[very thick,red] (9)--(22)--(10);
\draw[very thick,red] (12)--(23)--(13);
\draw[very thick,red] (20)--(24)--(21);
\end{scope}
\begin{scope}[xshift=6.5cm,scale=0.4] 
\node[] () at (8,-1) {(b)};
\node[stt] (1) at (12,0) {};
\node[stt] (2) at (13,0) {};
\node[stt,red] (3) at (14,0) {};
\node[stt] (8) at (0,1.5) {};
\node[stt,red] (9) at (3.5,1.5) {};
\node[stt,red] (10) at (5.5,1.5) {};
\node[stt] (11) at (7,1.5) {};
\node[stt,red] (12) at (8,1.5) {};
\node[stt,red] (13) at (9,1.5) {};
\node[stt] (14) at (10,1.5) {};
\node[stt] (15) at (11,1.5) {};
\node[stt] (16) at (12,1.5) {};
\node[stt] (17) at (13,1.5) {};
\node[stt] (18) at (1,3) {};
\node[stt] (19) at (7,3) {};
\node[stt,red] (20) at (10.5,3) {};
\node[stt,red] (21) at (13,3) {};
\node[draw,thick] () at (13,3) {};
\node[stt] (22) at (3.5,4.5) {};
\node[stt] (23) at (8,4.5) {};
\node[stt,black] (24) at (8,6) {};
\draw[thick] (8)--(18)--(22)--(24)--(23)--(19)--(11);
\draw[thick] (14)--(20)--(15);
\draw[thick] (1)--(16)--(21)--(17)--(2);
\draw[very thick,red] (9)--(22)--(10);
\draw[very thick,red] (12)--(23)--(13);
\draw[very thick,red] (20)--(24)--(21)--(3);
\end{scope}
\begin{scope}[yshift=-4cm,scale=0.4] 
\node[] () at (6.5,0.2) {(a')};
\node[stt] (1) at (0,1.5) {};
\node[stt,red] (2) at (1,1.5) {};
\node[stt,red] (3) at (2,1.5) {};
\node[stt] (4) at (3,1.5) {};
\node[stt] (5) at (4,1.5) {};
\node[stt] (6) at (5,1.5) {};
\node[stt] (7) at (6,1.5) {};
\node[stt] (8) at (0,3) {};
\node[stt,red] (9) at (3.5,3) {};
\node[stt,red] (10) at (5.5,3) {};
\node[stt] (11) at (7,1.5) {};
\node[stt,red] (12) at (8,1.5) {};
\node[stt,red] (13) at (9,1.5) {};
\node[stt] (14) at (10,1.5) {};
\node[stt] (15) at (11,1.5) {};
\node[stt] (16) at (12,1.5) {};
\node[stt] (17) at (13,1.5) {};
\node[stt] (18) at (1,4.5) {};
\node[stt] (19) at (7,3) {};
\node[stt,red] (20) at (10.5,3) {};
\node[stt,red] (21) at (13,3) {};
\node[draw,thick] () at (3.5,6) {};
\node[stt] (22) at (3.5,6) {};
\node[stt] (23) at (8,4.5) {};
\node[stt,black] (24) at (8,6) {};
\draw[thick] (1)--(8)--(18)--(22)--(24)--(23)--(19)--(11);
\draw[thick] (4)--(9)--(5);
\draw[thick] (6)--(10)--(7);
\draw[thick] (14)--(20)--(15);
\draw[thick] (16)--(21)--(17);
\draw[very thick,red] (2)--(18)--(3);
\draw[very thick,red] (9)--(22)--(10);
\draw[very thick,red] (12)--(23)--(13);
\draw[very thick,red] (20)--(24)--(21);
\end{scope}
\begin{scope}[xshift=6.5cm,yshift=-4cm,scale=0.4] 
\node[] () at (8,0.2) {(b')};
\node[stt] (1) at (12,1.5) {};
\node[stt] (2) at (13,1.5) {};
\node[stt,red] (3) at (14,1.5) {};
\node[stt] (8) at (0,1.5) {};
\node[stt,red] (9) at (3.5,1.5) {};
\node[stt,red] (10) at (5.5,1.5) {};
\node[stt] (11) at (7,1.5) {};
\node[stt,red] (12) at (8,1.5) {};
\node[stt,red] (13) at (9,1.5) {};
\node[stt] (14) at (10,1.5) {};
\node[stt] (15) at (11,1.5) {};
\node[stt] (16) at (12,3) {};
\node[stt] (17) at (13,3) {};
\node[stt] (18) at (1,3) {};
\node[stt] (19) at (7,3) {};
\node[stt,red] (20) at (10.5,3) {};
\node[stt,red] (21) at (13,4.5) {};
\node[draw,thick] () at (13,4.5) {};
\node[stt] (22) at (3.5,4.5) {};
\node[stt] (23) at (8,4.5) {};
\node[stt,black] (24) at (8,6) {};
\draw[thick] (8)--(18)--(22)--(24)--(23)--(19)--(11);
\draw[thick] (14)--(20)--(15);
\draw[thick] (1)--(16)--(21)--(17)--(2);
\draw[very thick,red] (9)--(22)--(10);
\draw[very thick,red] (12)--(23)--(13);
\draw[very thick,red] (20)--(24)--(21)--(3);
\end{scope}%
\end{tikzpicture}%
\end{center}%
\caption{Moore-like wtrees for $a=2, b=2, g=8$.}%
\label{fig:tree228}%
\end{figure}

In order to count the vertices of these wtrees, we may proceed as
before, but the additional partial levels would require us to resort
to two additional sets of recurrences to compute how many light and
heavy edges are present in the last two levels of the subwtrees that
were expanded, so we can finally count the number of vertices in the
additional partial levels. A better idea is to move the selected
childs one level up, as in figures~\ref{fig:tree228}(a') and
\ref{fig:tree228}(b'). In this way, all the leaves are aligned and the
recurrences in (\ref{eq:recurrence}) hold good for all levels
($i\geq 2$) and all cases. Then we only have to check which the new
base cases are.  It should be clear that the new base cases when the
selected child is light (as in Figure~\ref{fig:tree228}(a')), are:

\begin{equation}\label{eq:baseevengirth1}
\begin{array}{ll}
L_{0}=2, &H_{0}=0,\\
L_{1}=2(a-1),&H_{1}=0.
\end{array}
\end{equation}

Also, the base cases when the selected child is heavy (as in
Figure~\ref{fig:tree228}(b')) are:

\begin{equation}\label{eq:baseevengirth2}
\begin{array}{ll}
L_{0}=0, &H_{0}=1,\\
L_{1}=a,&H_{1}=1.
\end{array}
\end{equation}

Note that the root vertex is considered light ($L_{0}=2$) in
(\ref{eq:baseevengirth1}) and heavy ($H_{0}=1$) in
(\ref{eq:baseevengirth2}) this affects the number of heavy vertices at
level 2 as computed with the recurrences (\ref{eq:recurrence}). Also,
the selected child in both cases moves only one level up, so the
selected child is at level 0 in Figure~\ref{fig:tree228}(a') and it is
at level 1 in Figure~\ref{fig:tree228}(b') in agreement with equations
(\ref{eq:baseevengirth1}) and (\ref{eq:baseevengirth2}).  Let us name
these two new lower bounds:

\begin{equation}\label{eq:m2}
M_{2}:= M_{2}(a,b,g):=\sum_{i=0}^{(g-2)/2} (L_{i}+H_{i}) 
\textup{\hspace{0.5cm}using (\ref{eq:recurrence}) and (\ref{eq:baseevengirth1}), $g$ is even.}
\end{equation}

\begin{equation}\label{eq:m3}
M_{3}:= M_{3}(a,b,g):=\sum_{i=0}^{(g-2)/2} (L_{i}+H_{i}) 
\textup{\hspace{0.5cm}using (\ref{eq:recurrence}) and (\ref{eq:baseevengirth2}), $g$ is even.}
\end{equation}

Let $n=n(a,b,g)$. Note that parity forbids both $an\equiv 1$ and
$bn\equiv 1$, hence we can add one to each lower bound whenever the
bound itself is odd and either $a$ or $b$ is odd. Hence, for
$i\in\{1,2,3\}$, we define:
$$M_{i}^{+}= \begin{cases} 
M_{i}+1 &\textup{ if $M_{i}$ is odd and either $a$ or $b$ is odd,}\\
M_{i} &\textup{ otherwise.}
\end{cases}$$

Therefore, in this section we have proven that
$n(a,b,g)\geq M_{1}^{+}$, for odd $g$, and that
$n(a,b,g)\geq \max\{M_{2 }^{+},M_{3}^{+}\}$, for even $g$. However, we
have found in practice that almost always we have that
$M_{2}\geq M_{3}$ (and hence that $M_{2}^{+}\geq M_{3}^{+}$). The
only relevant exception we have found is
$M_{3}(1,2,10)=15 > 14 =M_{2}(1,2,10)$ (other exceptions occur when
the $(a,b,g)$-wcage does not even exist). Hence we prefer to state the
theorem that we have proven in this section as follows:

\begin{theorem}\label{thm:MBounds} Let $a\geq 1$, $b\geq 1$,
  $g\geq 3$. Then we have:
$$\begin{array}{llll}
n(a,b,g) &\geq &M_{1}^{+} &\textup{ when $g$ is odd,}\\
n(a,b,g) &\geq &M_{2}^{+} &\textup{ when $g$ is even.}\\
n(a,b,g) &\geq &M_{3}^{+}=16 &\textup{ when $a=1,b=2,g=10$.}\\
\end{array}$$
\end{theorem}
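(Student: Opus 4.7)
The plan is to exhibit, for any $(a,b,g)$-wcage $G$ and any root $v \in V(G)$, an induced subwgraph whose order realises the claimed bound; the preceding discussion already carries out the combinatorics, so the task is to package it into a formal argument. The argument is structural: I would build a rooted wtree level by level according to wdistance from $v$, verify that the counts $L_i, H_i$ satisfy the recurrences~(\ref{eq:recurrence}), check the appropriate base cases, and then apply a parity correction.

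For odd $g$, I would consider the induced subwgraph on the ball of wradius $h = (g-1)/2$ around $v$. Partitioning its vertices by wdistance, the counts at each level obey~(\ref{eq:recurrence}), and the base cases~(\ref{eq:baseoddgirth}) reflect that the root is light by convention and contributes $a$ light children to level~$1$. The key structural point is that all these vertices must be distinct: if two of them coincided in $G$, then concatenating the two wtree paths from $v$ to the common vertex would form a closed wwalk of weight at most $2h = g-1$, which must contain a wcycle of weight strictly less than $g$, contradicting $g(G)=g$. Summing $L_i + H_i$ for $0 \leq i \leq h$ yields $|G| \geq M_1$, and since degree parity forbids both $an \equiv 1$ and $bn \equiv 1$, we may round $M_1$ up to $M_1^+$ when one of $a,b$ is odd and $M_1$ is odd.

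For even $g$, I would use the same construction up to wdistance $h = (g-2)/2$ and then exploit the slack $h + (h+1) = g-1 < g$ to append one extra level beneath a distinguished child of $v$, chosen either light or heavy. The distinctness argument goes through unchanged, since any identification of vertices still forces a wcycle of weight at most $g-1$. To avoid bookkeeping the extra partial level separately, I would apply the renaming of Figure~\ref{fig:tree228}, relocating the distinguished child (together with its subtree) one level up so that all leaves are aligned at level $h$ and~(\ref{eq:recurrence}) holds for every $i \geq 2$; the shifted boundary values are captured by~(\ref{eq:baseevengirth1}) and~(\ref{eq:baseevengirth2}), giving $|G|\geq M_2$ and $|G| \geq M_3$ respectively, with the parity upgrade producing $M_2^+$ and $M_3^+$.

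For the special case $a=1, b=2, g=10$, I would evaluate $M_3(1,2,10)$ directly from~(\ref{eq:recurrence}) and~(\ref{eq:baseevengirth2}), obtaining $15$; since this is odd and $a=1$ is odd, the parity correction raises it to $M_3^+ = 16$. The main obstacle in presenting the proof cleanly is the careful justification of the ``move up'' renaming: I would need to verify that after the shift the counts $L_0, H_0, L_1, H_1$ prescribed in~(\ref{eq:baseevengirth1}) and~(\ref{eq:baseevengirth2}) do enumerate the original unshifted wtree, so that summing over the relocated levels recovers the order of the tree actually contained in $G$.
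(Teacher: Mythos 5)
Your proposal is correct and follows essentially the same route as the paper: Section~\ref{sec:mbounds} is itself the proof of Theorem~\ref{thm:MBounds}, built from exactly the wtree-by-wdistance construction, the distinctness argument via a wcycle of weight at most $g-1$, the level-shift renaming with base cases (\ref{eq:baseevengirth1})--(\ref{eq:baseevengirth2}), and the parity upgrade to $M_i^{+}$. The only difference is one of emphasis: you flag the justification of the ``move up'' renaming as the delicate step, which the paper treats somewhat informally.
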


We shall collectively denote these lower bounds ($M_{1}^{+}$, $M_{2}^{+}$ and $M_{3}^{+}$, according to the cases as in the previous Theorem) simply by $n_0(a,b,g)$. Hence the previous Theorem says that $n(a,b,g)\geq n_{0}(a,b,g)$. Note that the standard Moore lower bound for ordinary cages is $n_0(r,g) = n_0(r,0,g)$, and that the standard Moore trees are the same as the trees in this section in the case $b=0$. Whenever we have an $(a,b,g)$-wgraph of order $n$, we shall say that its \emph{excess} is $n-n_0(a,b,g)$. It is straightforward to verify that these lower bounds give the following closed formulas (we used GAP \cite{GAP4} for the required symbolic computations):
$$\begin{array}{ll}
g=3: & M_{1} = a+1\\ 
g=5: & M_{1} = a^2+b+1\\ 
g=7: & M_{1} = a^3-a^2+2ab+a+b+1\\ 
g=9: & M_{1} = a^4-2a^3+3a^2b+2a^2+b^2+1\\ 
g=11: & M_{1} = a^5-3a^4+4a^3b+4a^3-3a^2b+3ab^2-2a^2+b^2+a+1\\ 
&\\
g=4: & M_{2} = 2a\label{eq:otherbound}\\ 
g=6: & M_{2} = 2a^2-2a+2b+2\\ 
g=8: & M_{2} = 2a^3-4a^2+4ab+4a\\ 
g=10: & M_{2} = 2a^4-6a^3+6a^2b+8a^2-4ab+2b^2-4a+2\\
g=12: & M_{2} = 2a^5-8a^4+8a^3b+14a^3-12a^2b+6ab^2-12a^2+4ab+6a
\end{array}$$

These lower bounds are not great for $g=3$ or $g=4$ as we also have
the lower bound $n(a,b,g)\geq a+b+1$ from
Lemma~\ref{lem:trivialbound}, which often surpasses both of these
bounds. In the following two sections we shall determine $n(a,b,g)$
for $g=3,4$. After that (sections \ref{sec:g56} and \ref{sec:exp}), we
shall see that these lower bounds are much better for $g=5,6$, and
that they are relevant for $g\geq 7$.

\section{Weighted cages of girth 3}\label{sec:g3}

We shall prove Theorem~\ref{thm:mainthree} which characterizes
$n(a,b,3)$.  Recall by Lemma~\ref{lem:trivialbound} that, in general,
we have that $n(a,b,g)\geq a+b+1$ and that, when $ab\equiv 1$, we have
$n(a,b,g)\geq a+b+2$. Hence, Theorem~\ref{thm:mainthree} says that
these lower bounds are sharp except for the first two conditions in
the Theorem.  Note that a wcycle of girth $g=3$ must use only light
edges and hence the heavy edges can be placed freely in our wgraph
never affecting the already minimal girth of the wgraph.

A frequently used idea is that if $n=a+b+1$ and $L$ is already
$a$-regular of girth $g=3$, then its complement $H=\overline{L}$ can
always be used to obtain the desired wgraph
$G=(L,H)=(L,\overline{L})$.

\begin{theorem}\label{thm:mainthree}
For each $a\geq 0$ and $b\geq 0$ we have that 
$$n(a,b,3)=\begin{cases}
\infty& \tr{if } a<2 \\ 
6     & \tr{if } a=2 \tr{  and } b\in \{1,2\} \\
a+b+1 & \tr{if }  a=2 \tr{  and } b\not\in \{1,2\}\\
a+b+1 & \tr{if } a\geq 3 \tr{  and } ab\equiv 0 \\
a+b+2 & \tr{if } a\geq 3 \tr{  and } ab\equiv 1
\end{cases}$$
\end{theorem}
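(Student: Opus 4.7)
The plan is to establish each of the five regimes of the statement in turn. The case $a<2$ follows at once from Lemma~\ref{existence} together with Theorem~\ref{construction}: a wcycle of weight~$3$ is a triangle made entirely of light edges, which is impossible when $a<2$, so no $(a,b,3)$-wcycle (hence no wcage) exists and $n(a,b,3)=\infty$. For the remaining cases Lemma~\ref{lem:trivialbound} gives $n(a,b,3)\geq a+b+1$ (and $\geq a+b+2$ when $ab\equiv 1$). The only place where $a+b+1$ fails to be tight is $a=2$, $b\in\{1,2\}$: a 2-regular simple graph is a vertex-disjoint union of cycles, so on $4$ or $5$ vertices it must be $C_{4}$ or $C_{5}$, both triangle-free; combined with parity (for $b=1$) this pushes the bound up to $n\geq 6$.

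For the upper bounds when $a\geq 3$, set $n = a+b+1$ if $ab\equiv 0$ and $n = a+b+2$ if $ab\equiv 1$, and build $L$ and $H$ from a suitable factorization of $K_{n}$. When $n$ is odd (which forces $a$ even, hence $a\geq 4$), Lemma~\ref{lem:2factor} yields a 2-factorization $K_{n}=\bigcup_{i=1}^{\hf{n}} F_{i}$ whose triangle lies in $F_{1}\cup F_{2}$; take $L := F_{1}\cup\cdots\cup F_{a/2}$ and $H := \overline{L}$, so $L$ is $a$-regular of girth~$3$ and $H$ is $b$-regular. When $n$ is even, Lemma~\ref{lem:1factor} yields a 1-factorization $K_{n}=\bigcup_{i=0}^{n-2}\tilde{F}_{i}$ whose triangle lies in $\tilde{F}_{0}\cup\tilde{F}_{1}\cup\tilde{F}_{2}$; take $L := \tilde{F}_{0}\cup\cdots\cup\tilde{F}_{a-1}$ (which contains the triangle because $a\geq 3$) and let $H$ be the next $b$ factors, so $H=\overline{L}$ when $ab\equiv 0$, while $\tilde{F}_{n-2}$ is left unused when $ab\equiv 1$.

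The $a=2$ cases are finished by explicit small constructions: $K_{3}$ realises $b=0$; for $b\in\{1,2\}$ take $n=6$ with $L=C_{3}\cupdot C_{3}$ and let $H$ be either a perfect matching or a Hamilton $6$-cycle of the $K_{3,3}$ between the two triangles; and for $b\geq 3$ take $n=b+3\geq 6$, $L=C_{3}\cupdot C_{n-3}$, and $H=\overline{L}$, which is $(n-3)=b$-regular. The main obstacle is really only the sub-range $a=2$, $b\in\{1,2\}$: Lemma~\ref{lem:trivialbound} is not sharp there, and one needs the separate combinatorial observation that a triangle-containing $2$-regular simple graph has either $3$ or at least $6$ vertices in order to bump the bound up to $6$ and then match it with the explicit $C_{3}\cupdot C_{3}$ construction; everywhere else the factorization lemmas do essentially all the work.
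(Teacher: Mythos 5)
Your proposal is correct and follows essentially the same route as the paper: the lower bounds come from Lemma~\ref{lem:trivialbound}, the regime $a=2$, $b\in\{1,2\}$ is handled by observing that a $2$-regular graph with a triangle component cannot have $4$ or $5$ vertices (forcing $n\geq 6$, matched by $C_{3}\cupdot C_{3}$), and the $a\geq 3$ cases are built from the factorizations of Lemmas~\ref{lem:2factor} and~\ref{lem:1factor} with $H=\overline{L}$ or the leftover factors. The only difference is cosmetic: you organize the $a\geq 3$ constructions by the parity of $n$ rather than by the parity pattern of $(a,b)$, which yields the identical case split.
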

\begin{proof}
\textbf{Case 1  [$a<2$]:} Immediate form Lemma~\ref{existence}.

\textbf{Case 2 [$a=2$ and $b\in \{1,2\}$]:} Let $G$ be an
$(a,b,3)$-wcage and let $L$ its light-subgraph.  Since $a=2$, $L$ is a
disjoint union of cycles. Since $g=3$ one of these cycles must be a
triangle. Since $b>0$, we need at least two cycles in $L$ and since
cycles have length at least 3, it follows that $n(2,b,3)\geq 6$ in
this case. It should be clear that the required heavy edges can always
be added to the disjoint union of two triangles. Hence $n(2,b,3)=6$
for $b\in \{1,2\}$.

\textbf{Case 3 [$a=2$ and $b\not\in \{1,2\}$]:} Take $n=a+b+1
=b+3$. For $b=0$ a triangle $G$ will work. For $b\geq 3$, we can take
$L$ as the disjoint union of a triangle and a cycle of length
$b$. Then $G=(L,\overline{L})$ is the required wgraph.

\textbf{Case 4 [$a\geq 3$ and $ab\equiv 0$]:} Take $n=a+b+1$.  

Assume first that $a\equiv b\equiv 0$. Then $n\equiv 1$, $a\geq 4$ and
$n\geq 5$. Take $F_{i}$ as in Lemma~\ref{lem:2factor} and take
$L=\bigcup_{i=1}^{\hh{a}}F_{i}$. Then $G=(L,\overline{L})$ is the
required wgraph.

Assume now that $a\not\equiv b$. Then $n\equiv 0$ and $n\geq 4$. Take
$\tilde{F}_{i}$ as in Lemma~\ref{lem:1factor} and
$L=\bigcup_{0}^{a-1}\tilde{F}_{i}$. Then $G=(L,\overline{L})$ is the
required wgraph.

\textbf{Case 5 [$a\geq 3$ and $ab\equiv 1$]:} In this case, we have
$n\geq a+b+2$ by Lemma~\ref{lem:trivialbound}, but we can indeed
provide a wgraph on $a+b+2$ vertices with the required parameters:
Take $n=a+b+2\equiv 0$ and take $\tilde{F}_{i}$ as in
Lemma~\ref{lem:1factor}. Now take $L=\bigcup_{i=0}^{a-1}\tilde{F}_{i}$
and $H=\bigcup_{i=a}^{n-3}\tilde{F}_{i}$. Since $n-3=a+b-1$, $H$ is
$b$-regular and $G=(L,H)$ is the required wgraph.
\end{proof}

\section{Weighted cages of girth 4}\label{sec:g4}

In this section we prove Theorem~\ref{thm:mainfour} that determine the
values $n(a,b,4)$ for each $a\geq 0$ and $b\geq 0$. Besides the lower
bounds in Lemma~\ref{lem:trivialbound}, we also have the bound
$n(a,b,4)\geq M_{2}=2a$ from page~\pageref{eq:otherbound}. Hence,
Theorem~\ref{thm:mainfour} says that $n(a,b,4)$ stays close to these
bounds except when $a<2$. This time we have to avoid triangles in
$L=L(G)$ and also, we have to guarantee a wcycle of weight 4 in $G$,
which may be formed by four light edges or by two light edges and a
heavy edge. Once this is achieved, we can add heavy edges freely to
$G$ without changing the girth of $G$. Also, if $L$ is already
$a$-regular of girth $4$ and order $n=a+b+1$, then we can always get
the required wgraph by taking $G=(L,\overline{L})$.

\begin{lemma}\label{lem:leqab2} 
  If $3\leq a \leq b$ and $a\equiv b$,  then $n(a,b,4)\leq a+b+2$.
\end{lemma}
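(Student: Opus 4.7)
The plan is to split the vertex set into two equal halves and place all light edges between the halves, so that $L$ is bipartite and hence triangle-free, then put the heavy edges mostly inside each half, supplemented by a few crossing edges to hit the required heavy-regularity.

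Concretely, let $n=a+b+2$. Since $a\equiv b$, $n$ is even; set $m=n/2=(a+b+2)/2$, so $m\geq 4$ because $a\geq 3$. I would partition $V$ into $X\cupdot Y$ with $|X|=|Y|=m$, and apply Lemma~\ref{lem:1factorBip} to obtain a $1$-factorization $\hat{F}_0,\ldots,\hat{F}_{m-1}$ of the complete bipartite graph on $X,Y$ such that $\hat{F}_0\cup\hat{F}_1\cup\hat{F}_2$ contains a $4$-cycle. I would then take $L:=\hat{F}_0\cup\cdots\cup\hat{F}_{a-1}$, the union of the first $a$ bipartite matchings, and
\[
H:=K[X]\cup K[Y]\cup \hat{F}_a\cup\cdots\cup \hat{F}_{a+(b-a)/2-1},
\]
where $K[X]$ and $K[Y]$ denote the complete graphs on the two parts. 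The quantity $(b-a)/2$ is a nonnegative integer (because $a\leq b$ and $a\equiv b$) and is at most $m-a=(b-a+2)/2$, so enough bipartite matchings remain available.

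The verification is then routine. $L$ is $a$-regular and bipartite, hence triangle-free, and since $a\geq 3$ it contains the $4$-cycle guaranteed by Lemma~\ref{lem:1factorBip} because $\hat{F}_0\cup\hat{F}_1\cup\hat{F}_2\subseteq L$. Each vertex of $X$ has degree $(m-1)+(b-a)/2=(a+b)/2+(b-a)/2=b$ in $H$, and similarly for $Y$, so $H$ is $b$-regular. The subgraphs $L$ and $H$ are edge-disjoint because $L$ uses only the matchings $\hat{F}_0,\ldots,\hat{F}_{a-1}$ while the bipartite part of $H$ uses the disjoint matchings $\hat{F}_a,\ldots$, and the internal edges $K[X]\cup K[Y]$ are disjoint from the bipartite $L$. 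Finally, a wcycle of weight $3$ in $G=(L,H)$ would have to be a triangle of three light edges, impossible since $L$ is bipartite, while the $4$-cycle in $L$ is a wcycle of weight $4$; thus $g(G)=4$. Hence $G$ is an $(a,b,4)$-wgraph on $a+b+2$ vertices, which gives $n(a,b,4)\leq a+b+2$. The only real obstacle is the arithmetic bookkeeping showing that $(b-a)/2$ is a nonnegative integer in the correct range, and this follows immediately from $3\leq a\leq b$ and $a\equiv b$.
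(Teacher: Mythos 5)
Your construction is exactly the one in the paper: $L$ is the union of the first $a$ bipartite matchings from Lemma~\ref{lem:1factorBip}, and $H$ consists of all edges inside each part plus the next $(b-a)/2 = m-a-1$ matchings (your index range $\hat F_a,\ldots,\hat F_{a+(b-a)/2-1}$ coincides with the paper's $\hat F_a,\ldots,\hat F_{m-2}$). The degree count and girth verification match as well, so the proposal is correct and essentially identical to the paper's proof.
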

\begin{proof} 

  Let $n=a+b+2\equiv 0$ and $m=\hh{n}\geq a+1$. Note that $m\leq
  b+1$. Let $X,Y$ be the parts of the complete bipartite graph
  $K_{m,m}$ considered in Lemma~\ref{lem:1factorBip} and also let
  $\hat{F}_i$ as in that lemma. Take
  $L=\bigcup_{i=0}^{a-1}\hat{F}_{i}$. Clearly $L$ is $a$-regular, of
  girth 4 and order $n$. For $H$, take all possible edges within $X$
  and all possible edges within $Y$. At this point, $H$ is already
  $(m-1)$-regular, since $m\leq b+1$, $H$ could already be
  $b$-regular, but if not, the extra edges may obtained by adding to
  $H$ the edges of $\bigcup_{i=a}^{m-2}\hat{F}_{i}$. Since
  $(m-1)+(m-2-a+1) = 2m-a-2 = b$, $H$ is now $b$-regular and $G=(L,H)$
  is the required wgraph.
\end{proof}

\begin{lemma}\label{lem:newlemma} 
If $3 \leq a \leq b, a\equiv 0$ and $b> \hh{3a}-2$ then $n(a,b,g)=a+b+1$.
\end{lemma}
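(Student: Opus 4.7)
By Lemma~\ref{lem:trivialbound}, $n(a,b,4)\geq a+b+1$. Setting $n:=a+b+1$, the plan is to exhibit an $(a,b,4)$-wgraph $G=(L,H)$ on $n$ vertices, which will pin down $n(a,b,4)=a+b+1$. Because $n=a+b+1$, every pair of distinct vertices must be joined by some edge of $G$, so $H=\overline{L}$ is forced and is automatically $(n-1-a)=b$-regular once $L$ is $a$-regular. Triangle-freeness of $L$ kills every weight-$3$ wcycle, and since $a\geq 2$ and $L$ is triangle-free, any vertex $v$ of $L$ has two neighbors $u,u'$ with $uu'\notin E(L)$, hence $uu'\in E(H)$ and $u,v,u',u$ is a weight-$4$ wcycle of weight $1+1+2$. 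The task thus reduces to constructing an $a$-regular triangle-free graph $L$ on $n$ vertices.

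I split on the parity of $n$, which is opposite to that of $b$ since $a$ is even. If $n$ is even (so $b$ is odd), let $m=n/2$; the hypothesis $b\geq a$ gives $a\leq m$, and so $L:=\bigcup_{i=0}^{a-1}\hat{F}_i$, the union of $a$ of the one-factors of $K_{m,m}$ produced by Lemma~\ref{lem:1factorBip}, is a bipartite (hence triangle-free) $a$-regular graph. If $n$ is odd (so $b$ is even), bipartite $L$ is impossible and the full hypothesis $b>\frac{3a}{2}-2$, equivalently $n\geq\frac{5a}{2}$, is essential. My plan here is to start from an $a$-regular bipartite graph $B$ on $n-1$ vertices (from $K_{(n-1)/2,(n-1)/2}$ via Lemma~\ref{lem:1factorBip}), adjoin a new vertex $v_0$ joined to a set $N$ of $a/2$ carefully chosen vertices on each side of $B$, and delete a perfect matching of $a/2$ edges of $B$ inside $N$ to restore $a$-regularity. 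A short degree count forces the deleted edges to form a perfect matching of $N$; triangle-freeness then forces that matching to be all of the induced bipartite subgraph $B[N]$, so each special vertex's remaining $a-1$ bipartite neighbors must lie outside the specials of the opposite side.

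The main obstacle is the odd-$n$ case at the lower end of the hypothesis range. A contiguous-interval choice of specials in the cyclic one-factorization of $K_{(n-1)/2,(n-1)/2}$ needs $b\geq 2a-2$, strictly stronger than the stated $b\geq\frac{3a}{2}-1$; so the proof must select the $a$ one-factors of $B$ and the specials $N$ non-contiguously within that cyclic structure, exploiting shift identities in $\mathbb{Z}_{(n-1)/2}$ to make $B[N]$ a perfect matching under the tighter slack. An alternative route is to replace the add-vertex construction with an explicit circulant $L=C_n(S)$ of degree $a$, where $S\subset\{1,\ldots,\lfloor n/2\rfloor\}$ is chosen so that no three elements of $\pm S$ sum to $0\pmod{n}$; the ``odd-jump'' choice $S=\{1,3,\ldots,a-1\}$ works whenever $n>3(a-1)$, handling the generic range, and the finitely many remaining boundary values $\frac{5a}{2}\leq n\leq 3(a-1)$ are dealt with by specific constructions (for instance, a blow-up of $C_5$ with parts of size $a/2$ covers the corner case $n=\frac{5a}{2}$). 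In all cases the girth-$4$ check is immediate once $L$ is in hand.
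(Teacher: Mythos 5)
Your reduction is sound and matches the paper's: it suffices to build an $a$-regular triangle-free graph $L$ on $n=a+b+1$ vertices and take $H=\overline{L}$, and your even-$n$ case (union of $a$ one-factors of $K_{n/2,n/2}$) is complete and correct. The genuine gap is the odd-$n$ case, which is exactly where the hypothesis $b>\frac{3a}{2}-2$ (equivalently $n\geq\frac{5a}{2}$) does its work, and neither of your two routes closes it. Route one (adjoin a vertex to a bipartite $B$ and delete a matching) is left at the level of ``one must choose the one-factors and the specials non-contiguously, exploiting shift identities'' --- that is a statement of what remains to be proved, not a proof; you yourself note that the naive contiguous choice needs $b\geq 2a-2$, strictly stronger than the hypothesis. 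Route two (the circulant $C_n(\{1,3,\dots,a-1\})$) is correct for $n>3(a-1)$, but the leftover band of odd $n$ with $\frac{5a}{2}\leq n\leq 3(a-1)$ is not a finite list of exceptions: it contains on the order of $a/4$ values of $n$ and grows with $a$ (for $a=20$ it contains $n=51,53,55,57$, none of which equals $\frac{5a}{2}=50$). Of these you only construct the single case $n=\frac{5a}{2}$ (the $C_5$ blow-up), so the lemma remains unproved on that whole band.

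The paper closes exactly this band with one uniform construction that subsumes your $C_5$ blow-up: write $n=\frac{5a}{2}+\frac{a}{2}r+k$ with $r\geq 0$ and $0\leq k<\frac{a}{2}$, partition the vertices into independent sets of sizes $\frac{a}{2}+k$, $\frac{a}{2}+k$, $\frac{a}{2}$, $\frac{a}{2}$, $\frac{a}{2}-k$ (plus $r$ further sets of size $\frac{a}{2}$), arrange them along a triangle-free quotient (a $5$-cycle when $r=0$), put complete bipartite joins on all quotient edges except one, and on that remaining edge put an $\frac{a}{2}$-regular bipartite graph between the two sets of size $\frac{a}{2}+k$. Degrees come out to $a$ everywhere, triangle-freeness is inherited from the quotient, and this works for every $n\geq\frac{5a}{2}$ of either parity, so no parity split is needed at all. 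Replacing both of your odd-$n$ routes by this unbalanced blow-up (your $C_5$ blow-up is its $k=0$, $r=0$ instance) is the missing idea.
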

\begin{proof}
  As before, it will suffice to construct an $a$-regular graph $L$ of
  girth 4 and order $n=a+b+1$. By our hypotheses, we have
  $b\geq \hh{3a}-1$ and hence $n\geq \hh{5a}$.  Let $r$ and $k$ be
  integers such that $n = \hh{5a}+\hh{a}r+k$ with $r\geq 0$ and
  $0\leq k < \hh{a}$.

  Assume first that $r=0$. Figure~\ref{fig:ab4first} shows a diagram
  of our construction. There, each node represents an independent set
  of vertices of the indicated cardinality ($\hh{a}$ for the round
  nodes and $\hh{a}+k$ or $\hh{a}-k$ for the others). A solid line in
  the diagram, means to add all possible edges between the
  corresponding independent sets. The dashed line, means to add edges
  between the corresponding independent sets in such a way as to form
  an $\hh{a}$-regular bipartite graph among them. It is straightforward to
verify that the just constructed graph $L$ is $a$-regular, of girth 4
and of order $n=\hh{5a}+k$.

\begin{figure}[h]
\begin{center}
\begin{tikzpicture}[scale = 1]
\node[draw,thick] (1) at (2,2) {$\hh{a}+k$};
\node[draw,thick] (2) at (6,2) {$\hh{a}+k$};
\node[draw,thick,circle] (3) at (0,1) {$\hh{a}$};
\node[draw,thick,circle] (4) at (8,1) {$\hh{a}$};
\node[draw,thick] (5) at (4,0) {$\hh{a}-k$};
\draw[thick,dashed] (1)-- node[above]{$\hh{a}$} (2);
\draw[thick] (1)--(3)--(5)--(4)--(2);
\end{tikzpicture}
\end{center}
\caption{Construction of (a,b,4)-wcages for $n=\hh{5a}+k$.}
\label{fig:ab4first}
\end{figure}

Assume now, that $r\geq 1$. Figure~\ref{fig:ab4second} shows a diagram
of our construction. As before, our $n= \hh{5a}+\hh{a}r+k$ vertices
are partitioned into independent sets indicated by the nodes in the
diagram: round nodes contain $\hh{a}$ vertices and the other node
contains $k$ vertices, the number of gray nodes must be $r$ (and hence
there is at least one) always forming a path as indicated in the
diagram (hence, Figure~\ref{fig:ab4second} illustrates the case
$r=4$).  Again, the solid lines means to add all possible edges there
and the dashed lines means to add edges in such a way as to form an
$s$-regular bipartite graph there (the value of $s$ is indicated by
the number near the dashed line).  It is then straightforward to
verify that the just constructed graph $L$ is $a$-regular, of girth 4
and of order $n=\hh{5a}+\hh{a}r+k$.

\begin{figure}[h]
\begin{center}
\begin{tikzpicture}[scale = 1]
\node[draw,thick] (1) at (0,2) {$k$};
\node[draw,thick,circle] (2) at (2,4) {$\hh{a}$};
\node[draw,thick,circle] (3) at (2,0) {$\hh{a}$};
\node[draw,thick,circle] (4) at (6,4) {$\hh{a}$};
\node[draw,thick,circle] (5) at (6,0) {$\hh{a}$};
\node[draw,thick,circle] (6) at (4,2) {$\hh{a}$};
\node[draw,thick,circle,fill=gray!60] (10) at (10,4) {$\hh{a}$};
\draw[thick] (10) to[bend left] (5); 
\node[draw,thick,circle,fill=gray!60] (7) at (7.5,0.5) {$\hh{a}$};
\node[draw,thick,circle,fill=gray!60] (8) at (8.67,1.33) {$\hh{a}$};
\node[draw,thick,circle,fill=gray!60] (9) at (9.55,2.55) {$\hh{a}$};
\draw[thick,dashed] (2)-- node[below]{$\hh{a}-k$} (4)
  -- node[right]{$k$}(5)
  --node[above]{$\hh{a}-k$}(3);
\draw[thick] (2)--(1)--(3)--(6)--(4);
\draw[thick] (10) to[bend right] (2);
\end{tikzpicture}
\end{center}
\caption{Construction of (a,b,4)-wcages for
  $n=\hh{5a}+\hh{a}r+k$ with $r=4$.}
\label{fig:ab4second}
\end{figure}
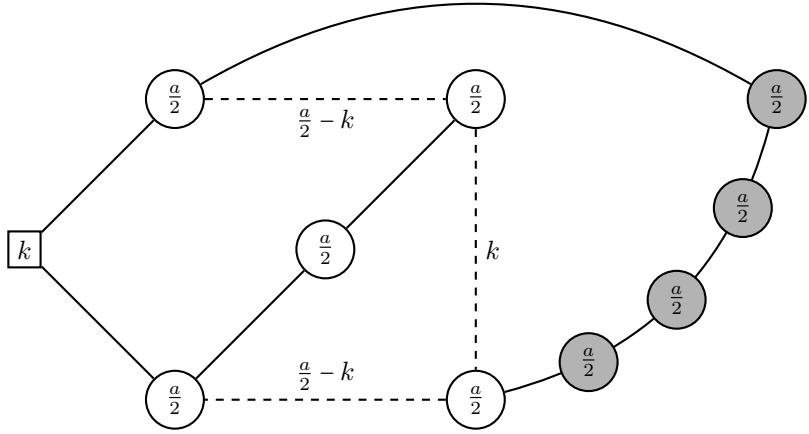

\end{proof}

\begin{lemma}\label{lem:hasatriangle} 
  Let $a\equiv 0$ and $n\equiv 1$ with $2a<n<\hh{5a}$. Then every
  $a$-regular graph $L$ on $n$ vertices has a triangle.
\end{lemma}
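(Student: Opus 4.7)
I would argue by contradiction, invoking the Andrásfai--Erdős--Sós theorem (AES): every triangle-free graph on $N$ vertices with minimum degree strictly greater than $2N/5$ is bipartite. This is exactly the extremal statement tailored to the regime $n<\hh{5a}$.

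Suppose $L$ is a triangle-free $a$-regular graph on $n$ vertices with $a\equiv 0$, $n\equiv 1$ and $2a < n < \hh{5a}$. The upper bound rewrites as $a > 2n/5$, so the minimum degree of $L$ is strictly greater than $2n/5$. Since $L$ is triangle-free, AES forces $L$ to be bipartite.

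Now I would use the elementary fact that a regular bipartite graph with positive degree has balanced parts: if $(X,Y)$ is a bipartition of the $a$-regular graph $L$, double-counting edges gives $a|X|=a|Y|$, and since $2a<n$ together with $a$ even forces $a\geq 4$ (the open interval $(2a,\hh{5a})$ is empty for $a\leq 2$), we have $a\geq 1$, hence $|X|=|Y|$. Therefore $n=|X|+|Y|$ is even, contradicting $n\equiv 1$, and the proof is complete.

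The only conceptual ingredient is AES. The $2/5$ threshold is sharp (the Andrásfai graphs, $k$-regular triangle-free on $3k-1$ vertices, are the extremal non-bipartite examples and sit just outside the range), and the hypothesis $n<\hh{5a}$ puts us safely on the bipartite side. The main obstacle, should one wish a self-contained argument rather than citing AES, would be to reprove AES in this special case: take a shortest odd cycle of $L$ (length $\geq 5$ by triangle-freeness), and use the high minimum degree to either exhibit a chord that produces a shorter odd cycle or to find an external vertex jointly adjacent to two cycle vertices at the wrong parity, yielding the desired contradiction.
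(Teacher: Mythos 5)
Your proof is correct, and it takes a genuinely different route from the paper's. You outsource the structural work to the Andr\'asfai--Erd\H{o}s--S\'os theorem: since $n<\hh{5a}$ is exactly $a>\frac{2n}{5}$, a triangle-free $a$-regular graph on $n$ vertices would be bipartite, and a regular bipartite graph with positive degree has equal parts, forcing $n$ even and contradicting $n\equiv 1$. The paper instead gives a self-contained argument: fixing an edge $xy$, it partitions the vertex set into $\{x,y\}$, the neighborhoods $A_x$, $A_y$, and a small leftover set $I$ of odd size $n-2a\leq\hh{a}-1$; the degree condition forces each vertex of $I$ to attach to only one side, splits $I$ into two independent sets $I_x$, $I_y$, and an edge count yields $|I_x|=|I_y|$, contradicting $|I|$ odd. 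In effect the paper reproves the special case of AES it needs, and both arguments end in the same parity contradiction; yours is shorter and makes the $2/5$ threshold conceptually transparent at the cost of an external citation, while the paper's stays elementary and self-contained (and also uses the hypothesis $2a<n$, which your argument does not need, since a triangle-free $a$-regular graph already forces $n\geq 2a$). One small inaccuracy in your closing remark: the extremal non-bipartite triangle-free graphs at minimum degree exactly $\frac{2N}{5}$ are the balanced blow-ups of $C_5$; the higher Andr\'asfai graphs ($k$-regular on $3k-1$ vertices for $k\geq 3$) have degree ratio below $\frac{2}{5}$ and enter only in the finer refinements of AES. This does not affect the proof.
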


\begin{proof} 
  Let $L$ be a triangle-free $a$-regular graph with $n$ vertices, $n$
  an odd integer and $2a< n<\hh{5a}$.

  Let $x$ and $y$ be two adjacent vertices and let
  $A_x=N(x)\setminus \{y\}$ and $A_y=N(y)\setminus \{x\}$. Then,
  $A_x\cap A_y$ is empty. Hence,
  $I:=V\setminus (A_x\cup A_y\cup \{x,y\})$ has $n-2a\leq \hh{a}-1$
  vertices, since $|A_x|=|A_y|=a-1$. Moreover, $|I|$ is odd and each
  vertex $u\in I$ has at least $\hh{a}+2$ neighbors not in $I$.

  We first prove that for each vertex $u$ in $I$, $N(u)\cap A_x$ is
  empty or $N(u)\cap A_y$ is empty.  For the sake of a contradiction,
  let us assume without loss of generality that a vertex $u\in I$ has
  $k$ neighbors in $I$, $l_x$ neighbors in $A_x$ and $l_y$ neighbors
  in $A_y$, with $l_x\geq l_y\geq 1$. Then, $a=k+l_x+l_y$.  Let
  $w\in N(u)\cap A_y$. Then
$$N(w)\subseteq \{y\}\cup (A_x\setminus N(u))\cup (I\setminus N(u)).$$
Thus, as $|A_x|=a-1$, we get that 
$$a\leq 1+|A_x|-|A_x\cap N(u)|+|I|-|I\cap N(u)|=a-l_x+|I|-k.$$
Hence, $l_x+k\leq |I|$. But, since $l_x\geq l_y$ and $k+l_x+l_y=a$, we
get the contradiction $2|I|\geq a$.

This property allows us to split $I$ into the sets $I_x$ and $I_y$,
where $I_x$ (resp. $I_y$) contains all vertices in $I$ having a
neighbor in $A_x$ (resp. $A_y$).

Let $u\in I_x$. Then, $u$ has no neighbors in $A_y$ and it has at most
$\hh{a}-2$ neighbors in $I$. Hence, it has at least $\hh{a}+2$
neighbors in $A_x$. Therefore, the set $I_x$ is independent.  A
similar argument shows that the set $I_y$ is independent as well.

For sets $X$ and $Y$, let $e(X,Y)$ denotes the number of edges between
$X$ and $Y$. We have that $$a|I_x|=e(I_x,A_x)+e(I_x,I_y),$$
$$a|I_y|=e(I_y,A_y)+e(I_y,I_x),$$
and
$$(a-1)^2=e(A_x,A_y)+e(A_x,I_x)=e(A_x,A_y)+e(A_y,I_y).$$
These equalities imply that $|I_x|=|I_y|$ which is not possible when
$I$ is odd.
\end{proof}

\begin{theorem}\label{thm:mainfour}
For each $a\geq 0$ and $b\geq 0$ we have that 
$$n(a,b,4)=\begin{cases}
\infty& \tr{ if } a<2 \\ 
2a     & \tr{ if } a=2 \tr{  and } b=0 \\
a+b+1   & \tr{ if } a=2 \tr{  and } b\geq 1 \\
2a    & \tr{ if } 3 \leq a > b \tr{ and } ab\equiv 0 \\  
2a+2  & \tr{ if } 3 \leq a > b \tr{ and }   ab\equiv 1\\
a+b+1 & \tr{ if } 3 \leq a \leq b \tr{ and } a\not\equiv b\\
a+b+2 & \tr{ if } 3 \leq a \leq b , a\equiv b\equiv 1\\
a+b+2 & \tr{ if } 3 \leq a \leq b , a\equiv b\equiv 0\tr{ and } b\leq \hh{3a}-2\\
a+b+1 & \tr{ if } 3 \leq a \leq b , a\equiv b\equiv 0\tr{ and } b> \hh{3a}-2.
\end{cases}$$
\end{theorem}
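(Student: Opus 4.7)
The plan is to verify the nine cases of the theorem one by one, in each case matching a lower bound---coming from Lemma~\ref{lem:trivialbound}, from the Moore-type bound $M_2=2a$, from parity, or from Lemma~\ref{lem:hasatriangle}---with a construction obtained either from Lemma~\ref{lem:leqab2}, Lemma~\ref{lem:newlemma}, or built by hand from the factorizations in Lemmas~\ref{lem:2factor}, \ref{lem:1factor}, and \ref{lem:1factorBip}. A useful recurring observation is that whenever $L$ is triangle-free and contains a 4-cycle, any $H$ disjoint from $L$ yields $g(G)=4$, since wcycles of weight at most $4$ must be triangles in $L$ (weight $3$), 4-cycles in $L$ (weight $4$), or triangles with two light and one heavy edge (weight $4$), and only the first is forbidden.

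Cases 1--3 and 7--9 require little new ingenuity. Case 1 is immediate from Lemma~\ref{existence}. For $a=2$, a 2-regular $L$ is a disjoint union of cycles of length at least $4$; $L=C_4$ handles $b=0$, and $L=C_n$ with $H=\overline{L}$ handles $b\geq 1$ for $n=a+b+1$. Case 7 combines the parity-enhanced trivial lower bound $n\geq a+b+2$ with the matching upper bound from Lemma~\ref{lem:leqab2}. Case 8 uses Lemma~\ref{lem:leqab2} for the upper bound, and applies Lemma~\ref{lem:hasatriangle} to the odd integer $n=a+b+1$, which falls strictly between $2a$ and $\frac{5a}{2}$ under the hypothesis $b\leq \frac{3a}{2}-2$, to forbid $n=a+b+1$. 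Case 9 is exactly Lemma~\ref{lem:newlemma}.

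For Cases 4 and 6 I produce explicit constructions. In Case 4 (so $3\leq a>b$ with $ab\equiv 0$) set $L=K_{a,a}$ on $n=2a$ vertices; it is triangle-free, $a$-regular, and contains 4-cycles. Its complement $\overline{L}\cong 2K_a$ is large enough to contain a $b$-regular subgraph $H$: when $a$ is even, place $b$ disjoint 1-factors on each $K_a$ via Lemma~\ref{lem:1factor}; when $a$ is odd (forcing $b$ even), place $b/2$ disjoint 2-factors on each $K_a$ via Lemma~\ref{lem:2factor}. This matches $M_2=2a$. In Case 6 ($3\leq a\leq b$ with $a\not\equiv b$) the strict inequality $a<b$ gives $m:=(a+b+1)/2\geq a+1$, so I form $L$ from the union of $a$ of the 1-factors of $K_{m,m}$ (including $\hat F_0,\hat F_1,\hat F_2$ to guarantee a 4-cycle) via Lemma~\ref{lem:1factorBip}, and take $H=\overline{L}$, which is $(n-1-a)=b$-regular.

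The main obstacle is Case 5 ($3\leq a>b$ with $ab\equiv 1$, so both $a$ and $b$ are odd), where the Moore bound only gives $n\geq 2a$ and parity only eliminates $n=2a+1$. The key lower-bound step is to rule out $n=2a$: a standard extremal argument shows that any $a$-regular triangle-free graph on $2a$ vertices must be $K_{a,a}$ (a vertex $v$ has $a$ pairwise non-adjacent neighbors, each of which must send its remaining $a-1$ edges into the $n-1-a=a-1$ non-neighbors of $v$, forcing the bipartition to be complete), but then $\overline{L}=2K_a$ admits no $b$-regular subgraph since $K_a$ on an odd number of vertices cannot carry an odd-regular subgraph. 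Hence $n\geq 2a+2$. To match this, I take $L=K_{a+1,a+1}-M$ for a perfect matching $M$: this is $a$-regular, bipartite, of girth $4$, on $2a+2$ vertices, and its complement in $K_{2a+2}$ consists of $K_{a+1}\cup K_{a+1}$ together with $M$. Set $H=M\cup F$, where $F$ is $(b-1)$-regular on each copy of $K_{a+1}$; this $F$ exists because $b-1$ and $a+1$ are both even and $b-1\leq a-2$, so Lemma~\ref{lem:1factor} supplies the required $b-1$ disjoint 1-factors on each $K_{a+1}$, giving the desired $(a,b,4)$-wgraph.
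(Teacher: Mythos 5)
Your proof is correct and follows essentially the same route as the paper: the same case split, the same constructions ($L=K_{a,a}$ for Case 4, $K_{a+1,a+1}-M$ for Case 5, the $K_{m,m}$ factorization for Case 6, and Lemmas~\ref{lem:leqab2}, \ref{lem:newlemma}, \ref{lem:hasatriangle} for Cases 7--9), and the same parity obstructions. The only cosmetic difference is in Case 5, where you prove directly that an $a$-regular triangle-free graph on $2a$ vertices is $K_{a,a}$ (the paper cites Tur\'an's theorem) and distribute the heavy degree as $M$ plus a $(b-1)$-regular part rather than putting all $b$ heavy edges inside the parts.
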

\begin{proof} Recall that $n(a,b,4)\geq a+b+1$ and that
  $n(a,b,4)\geq 2a$, so constructions of these orders immediately
  determine $n(a,b,4)$.

\textbf{Case 1 [$a<2$]:} Immediate from Lemma~\ref{existence}. 

\textbf{Case 2 [$a=2$, $b=0$]:} Immediate.

\textbf{Case 3 [$a=2$, $b\geq 1$]:} Take $n=a+b+1\geq 4$ and $L$ as a
cycle of length $n$. Take $G=(L,\overline{L})$. The girth $4$ in $G$
is guaranteed by any two consecutive edges in the cycle and the
corresponding heavy chord. Thus $G$ is the required wgraph.

\textbf{Case 4 [$3 \leq a > b$ and $ab\equiv 0$]:} Take $n=2a$ and
$L=K_{a,a}$. Since $a\geq 3$, $L$ already has girth $4$. Let $X$ and
$Y$ be the independent parts of $L$ on $a$ vertices each. Since $a>b$,
we can always put a $b$-regular graph in each of $X$ and $Y$: it is
immediate for $a=3$; use Lemma~\ref{lem:1factor} when
$4\leq a\equiv 0$ or use Lemma~\ref{lem:2factor} when
$5\leq a\equiv 1$, $b\equiv 0$. Let $H$ be the disjoint union of these
two $b$-regular graphs, then $G=(L,H)$ is the sought wgraph.

\textbf{Case 5 [$3 \leq a > b$ and $ab\equiv 1$]:} If there is a
wgraph $G$ with the required parameters and $|G|=2a$, by Turán's
Theorem, we must have $L =L(G)= K_{a,a}$. But then, since
$a\equiv b \equiv 1$, parity forbids to add the required heavy edges
to the parts $X$, $Y$ of $L$ to obtain $G$. Also, since $a\equiv 1$,
parity forbids $|G|=|L|=2a+1$. Hence $n(a,b,4)\geq 2a+2$ in this
case. Let $n=2a+2$, let $M$ be a matching of $K_{a+1,a+1}$ and take
$L=K_{a+1,a+1}-M$. Then $4\leq a+1 \equiv 0$ and by
Lemma~\ref{lem:1factor} we can add the required heavy edges to the
parts $X$, $Y$ of $L$ to obtain a $b$-regular $H$. Hence $G=(L,H)$ is
the required wgraph.

\textbf{Case 6 [$3 \leq a \leq b$ and $a \not\equiv b$]:} Take
$n=a+b+1\equiv 0$ and $m=\hh{n} > a$.  Take $\hat{F_{i}}$ as in
Lemma~\ref{lem:1factorBip}. Define $L=\bigcup_{i=0}^{a-1}\hat{F}_i$, then $L$
is $a$-regular of girth $4$. Hence $G=(L,\overline{L})$ is the
required wgraph.

\textbf{Case 7 [$3 \leq a \leq b$ and $a\equiv b\equiv 1$]:} Parity
forbids $|G|=a+b+1$. Hence $n(a,b,4)=a+b+2$ by Lemma~\ref{lem:leqab2}.

\textbf{Case 8 [$3 \leq a \leq b$, $a\equiv b\equiv 0$ and
  $b\leq \hh{3a}-2$]:} Assume first that $n=a+b+1$ and that $G=(L,H)$
is an $(a,b,4)$-wcage on $n$ vertices. Note that $n\equiv 1$. By our
hypotheses, we have that $n=a+b+1 \leq a+\hh{3a}-2+1 < \hh{5a}$ and
that $n=a+b+1 \geq 2a+1 > 2a$. Hence, by Lemma~\ref{lem:hasatriangle},
$L$ has a triangle, which is a contradiction. It follows that $n(a,b,4)\geq a+b+2$ and by Lemma~\ref{lem:leqab2}, that $n(a,b,4)=a+b+2$.

\textbf{Case 9 [$3 \leq a \leq b$, $a\equiv b\equiv 0$ and
  $b > \hh{3a}-2$]:} Immediate from Lemma~\ref{lem:newlemma}.
\end{proof}

We find interesting the following reinterpretation of the results in this section: 
 
\begin{theorem}\label{t:aregularexists} For each $a\geq 3$ there is an
  $a$-regular graph with girth four and $n$ vertices if and only
  if any of the following cases holds.
\begin{enumerate}
\item $n \equiv 0$ and $n\geq 2a$ or,
\item $n \equiv 1$ and  $a\equiv 0$ and $n\geq \hh{5a}$.
\end{enumerate} 
\end{theorem}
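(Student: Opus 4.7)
The plan is to split the equivalence into its two directions; both reduce to tools already proved in this section.

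For necessity, I would take any $a$-regular graph $L$ of girth $4$ on $n$ vertices. Parity forbids $an$ from being odd, so if $a$ is odd then $n\equiv 0$. Since $L$ is triangle-free, Mantel's theorem gives $an/2 = |E(L)| \leq \lfloor n^{2}/4\rfloor$, which rearranges to $n\geq 2a$; this closes case~(1). In the remaining situation $n\equiv 1$, parity forces $a\equiv 0$, and I would invoke Lemma~\ref{lem:hasatriangle}: if $2a<n<\hh{5a}$ with $n\equiv 1$ and $a\equiv 0$, every $a$-regular graph on $n$ vertices contains a triangle, contradicting $g(L)=4$; hence $n\geq \hh{5a}$, which is case~(2).

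For sufficiency I would exhibit an explicit $L$ in each case. When $n\equiv 0$ and $n\geq 2a$, I would set $m=n/2\geq a\geq 3$ and apply Lemma~\ref{lem:1factorBip} to obtain a $1$-factorization $\{\hat F_{0},\ldots,\hat F_{m-1}\}$ of $K_{m,m}$ whose first three factors contain a $4$-cycle. Then $L=\bigcup_{i=0}^{a-1}\hat F_{i}$ is bipartite (hence triangle-free), $a$-regular, and contains a $4$-cycle, so $g(L)=4$. When $n\equiv 1$ with $a\equiv 0$ and $n\geq \hh{5a}$, I would recycle the construction from the proof of Lemma~\ref{lem:newlemma}: setting $b=n-a-1$, the bound $n\geq \hh{5a}$ is equivalent to $b>\hh{3a}-2$, so the block-partition diagrams of Figures~\ref{fig:ab4first} and~\ref{fig:ab4second} directly yield an $a$-regular graph of girth $4$ on $n$ vertices.

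The only genuinely nontrivial ingredient is ruling out triangle-free $a$-regular graphs of odd order $n$ with $2a<n<\hh{5a}$ when $a$ is even, and that is precisely the content of Lemma~\ref{lem:hasatriangle}. Everything else is routine parity-and-Mantel bookkeeping together with two off-the-shelf constructions from Section~\ref{sec:g4}, so I would not anticipate any substantial obstacle.
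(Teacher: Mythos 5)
Your proposal is correct and follows essentially the same route as the paper: Lemma~\ref{lem:1factorBip} for the even case, the construction of Lemma~\ref{lem:newlemma} (via $b=n-a-1$) for the odd case, and Lemma~\ref{lem:hasatriangle} plus parity for necessity. The only cosmetic difference is that you derive $n\geq 2a$ from Mantel's theorem where the paper cites its Moore-like bound $n(a,b,4)\geq 2a$; both are the same one-line disjoint-neighborhoods argument.
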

\begin{proof} Let $a$, $n$ as in the statement. Assume $L$ is an
  $a$-regular graph of girth 4 and order $n$ (if it exists). Since
  $n(a,b,4)\geq 2a$, no such $L$ exists for $|L|<2a$.

  Assume first that $n\equiv 0$ and $n\geq 2a$, then take $m=\hh{n}$
  and $\hat{F}_{i}$ as in Lemma~\ref{lem:1factorBip}, now
  $L=\bigcup_{i=0}^{a-1}\hat{F}_i$ is the required graph. Note that
  parity forbids $n\equiv a\equiv 1$.  Assume next that $n\equiv 1$,
  $a\equiv 0$ and $2a < n < \hh{5a}$, then, by
  Lemma~\ref{lem:hasatriangle}, $L$ does not exist.  Finally, if
  $n\equiv 1$, $a\equiv 0$ and $n\geq \hh{5a}$, take $b=n-a-1$. Then
  $b\geq \hh{3a}-1> \hh{3a}-2$. By Lemma~\ref{lem:newlemma}, there is
  an $(a,b,g)$-wgraph on $a+b+1$ vertices. Then $L=L(G)$ is the
  required graph.
\end{proof}

\section{Weighted cages of girth 5 and 6}\label{sec:g56}

Contrary to the cases $g=3$ and $g=4$, our Moore-like bounds in
Theorem~\ref{thm:MBounds} are very good for $g=5$ and $g=6$. Indeed we
shall see in the next section that for these values of $g$, $n(a,b,g)$
coincides with the corresponding Moore-like bound for all the finite
values that we could compute, except for
$n(4,1,5)=20>18 =M_{1}^{+}(4,1,5)$.  The following theorem proves that
this is indeed the case at least for $a=1,2$:

\begin{theorem}\label{a12g560}
  If $n(a,b,5)<\infty$ and $a\in \{1,2\}$, then
  $n(a,b,5)=M_{1}^{+}(a,b,5)$. Also, if $n(a,b,6)<\infty$ and
  $a\in \{1,2\}$ then  $n(a,b,6)=M_{2}^{+}(a,b,6)$.
\end{theorem}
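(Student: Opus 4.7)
Theorem~\ref{thm:MBounds} already gives the lower bounds $n(a,b,5)\geq M_1^{+}(a,b,5)$ and $n(a,b,6)\geq M_2^{+}(a,b,6)$, so I only need to exhibit, in each of the four sub-cases $(a,g)\in\{(1,5),(2,5),(1,6),(2,6)\}$, an explicit $(a,b,g)$-wgraph attaining the bound. In every case the correct regularities are immediate from the construction, so the real work is the girth verification.

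For $a=1$, $L$ is forced to be a perfect matching, so $n$ is even. For $g=5$ and $b$ even, take $L\cup H=K_{b+2}$ with $L$ any perfect matching, so that $H$ is automatically $b$-regular and every triangle of $K_{b+2}$ supported on an $L$-edge has weight exactly $5$. For $g=5$ and $b$ odd (so $b\geq 3$), $1$-factorize $K_{b+3}$ via Lemma~\ref{lem:1factor}, use one factor as $L$, discard one, and take $H$ to be the union of the remaining $b$; an inclusion--exclusion count shows that the endpoints of any light edge share at least $b-1\geq 2$ common $H$-neighbors, producing a $1L+2H$ triangle. For $g=6$, take $L\cup H=K_{b+1,b+1}$ with $L$ the matching $\{x_iy_i:0\leq i\leq b\}$ between the two parts: bipartiteness rules out all odd wcycles (in particular all triangles), while every $4$-cycle $x_iy_ix_jy_j$ is a $2L+2H$ wcycle of weight $6$, giving girth exactly $6$.

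For $a=2$, take $L=C_n$ and let $H$ be a Cayley graph on $\mathbb{Z}_n$ whose symmetric connection set $S$ avoids small residues. For $g=5$ and $n=b+5$, the choice $S=\mathbb{Z}_n\setminus\{0,\pm 1,\pm 2\}$ makes $H$ exactly $(n-5)=b$-regular; the condition $\pm 2\notin S$ excludes $2L+1H$ triangles (the only weight-$4$ wcycle left once $L$ has girth $n\geq 5$), and the $4$-cycle $v,v+1,v+2,v+3$ closed by the chord $v(v+3)\in H$ realizes weight $5$. For $g=6$ and $n=2b+6$, I additionally need $\pm 3\notin S$ (to kill $3L+1H$ $4$-cycles) and no two elements of $S$ differing by $\pm 1$ (to kill $1L+2H$ triangles). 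A clean solution is to restrict $S$ to the \emph{even} residues of absolute value $\geq 4$: set $S=\{\pm 4,\pm 6,\dots,\pm(b+2)\}$ when $b$ is even, and $S=\{\pm 4,\pm 6,\dots,\pm(b+1)\}\cup\{b+3\}$ when $b$ is odd (noting that then $b+3=n/2$ is itself even). Then $|S|=b$, all differences within $S$ are even hence never $\pm 1$, and the $4$-cycle $0,1,5,4$ is a $2L+2H$ wcycle of weight $6$ (since $\pm 4\in S$), so the girth equals $6$.

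The main obstacle is the $(2,b,6)$ case, where weight-$4$ and weight-$5$ wcycles of several types (triangles and $4$-cycles with various light/heavy splits) have to be excluded simultaneously. The choice of $S$ lying in the even residues of absolute value $\geq 4$ handles every obstruction at once---the small-distance condition kills the $2L+1H$ and $3L+1H$ patterns, and the even-parity condition kills the $1L+2H$ pattern---and a direct count confirms that the window $\{4,5,\dots,n/2\}$ in $\mathbb{Z}_{2b+6}$ accommodates exactly $b$ such residues after symmetrization. Everything else reduces to a finite enumeration: each weight-$<g$ wcycle pattern is ruled out either by the choice of $S$ or by the girth of $C_n$.
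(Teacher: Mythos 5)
Your proposal is correct: the lower bounds come from Theorem~\ref{thm:MBounds} exactly as in the paper, and all four constructions meet them, so the only question is how the constructions compare. For girth $5$ yours coincide with the paper's: for $a=1$ a perfect matching $L$ inside $K_{b+2}$ (respectively inside a $1$-factorization of $K_{b+3}$ from Lemma~\ref{lem:1factor}), and for $a=2$ the cycle $L=C_n$ with $H=\overline{L^2}$, which is precisely your circulant with connection set $\mathbb{Z}_n\setminus\{0,\pm1,\pm2\}$. For girth $6$ you genuinely diverge. For $(1,b,6)$ the paper takes $H=K_{b+1}\cupdot K_{b+1}$ joined by a light matching, so the weight-$6$ wcycles are heavy triangles; you instead take $L\cup H=K_{b+1,b+1}$ with $L$ a perfect matching, so bipartiteness eliminates all odd wcycles at once and the weight-$6$ wcycles are the $2L+2H$ quadrilaterals $x_iy_ix_jy_j$. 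For $(2,b,6)$ the paper uses $H=\overline{C_{b+3}}\cupdot\overline{C_{b+3}}$ with a zigzagging Hamiltonian light cycle and leaves the condition that no two consecutive light edges join $H$-adjacent vertices to be arranged; your circulant $C_n(S)$ with $S$ the symmetrized even residues of absolute value at least $4$ makes every exclusion explicit (the bound $|s|\geq 4$ kills the $2L+1H$ triangles and $3L+1H$ quadrilaterals, the even parity of $S$ kills the $1L+2H$ triangles), and your degree count $|S|=b$ checks out in both parities, including the self-paired residue $n/2=b+3$ when $b$ is odd. Both routes are equally valid; yours buys a uniform and fully verifiable recipe for the girth-$6$ cases at the cost of a slightly more delicate choice of connection set, while the paper's is shorter to state but leaves a small existence check implicit.
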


For the reader's convenience and using the polynomials in page
\pageref{eq:otherbound}, we restate the previous theorem in the
following equivalent form:

\begin{theorem}\label{a12g56}
The following relations hold: 

\begin{tabular}{ll}
$n(1,b,5)=b+2$ & \textup{for $2\leq b \equiv 0$},\\
$n(1,b,5)=b+3$ & \textup{for $3\leq b \equiv 1$},\\
$n(2,b,5)=b+5$ & \\
$n(1,b,6)= 2b+2$ & \textup{for $b\geq 1$},\\
$n(2,b,6)= 2b+6$, & \\
\end{tabular}
\end{theorem}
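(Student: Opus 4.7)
The lower bounds $n(a,b,5)\geq M_1^+(a,b,5)$ and $n(a,b,6)\geq M_2^+(a,b,6)$ from Theorem~\ref{thm:MBounds}, combined with the closed formulas for $M_1,M_2$ on page~\pageref{eq:otherbound}, already match the claimed values: $M_1(1,b,5)=b+2$ (which the parity correction pushes up to $b+3$ when $b$ is odd), $M_1(2,b,5)=b+5$, $M_2(1,b,6)=2b+2$, and $M_2(2,b,6)=2b+6$. Hence the entire content of the theorem reduces to producing, in each of the five cases, an $(a,b,g)$-wgraph of the claimed order.

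The plan is to build these five wgraphs by cyclic or ``blown-up'' constructions. For $a=1,g=5,b$ even, take $L$ to be a perfect matching on $\mathbb{Z}_{b+2}$ and $H=K_{b+2}-L$. For $a=1,g=5,b$ odd (where parity forces $n=b+3$ to be even), pick a Hamiltonian cycle $C$ on $\mathbb{Z}_{b+3}$, split it into two alternating perfect matchings $L$ and $M'$, and set $H=K_{b+3}-C$. For $a=2,g=5$, take $L=C_{b+5}$ and $H=K_{b+5}-L^2$, i.e.\ the chords of $L$-distance at least $3$. For $a=1,g=6$, form the ``doubling'' of $K_{b+1}$ with vertex set $\{0,\dots,b\}\times\{0,1\}$, light edges $\{(i,0)(i,1):i\in\{0,\dots,b\}\}$, and heavy edges $\{(i,k)(j,k):i\ne j,\,k\in\{0,1\}\}$, so $H$ is the disjoint union of two copies of $K_{b+1}$. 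For $a=2,g=6$, on $\mathbb{Z}_{2b+6}$ take $L=C_{2b+6}$ and let $H$ be the circulant with connection set $S=\{4,6,\dots,2b+2\}$, i.e.\ all even distances from $4$ to $n-4$.

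In each construction the degree counts are immediate (simple complement arithmetic, or the observation that $|S|=b$), and the girth verification is a finite enumeration of wcycles of weight less than $g$. When $L$ is a matching (the three $a=1$ cases) the only potential low-weight wcycle to rule out is a $\{1L,2H\}$-triangle, and this is blocked by showing that the two endpoints of every light edge have disjoint heavy neighborhoods. When $L=C_n$ (the two $a=2$ cases) one forbids triangles closed by a distance-$2$ chord (weight $4$), and in the $g=6$ case additionally $4$-cycles closed by a distance-$3$ chord and $\{1L,2H\}$-triangles (both of weight $5$). I expect the main obstacle to be the triangle-avoidance condition in the $a=2,g=6$ case: it reduces to asking $S\cap(S+1)=\varnothing$ in $\mathbb{Z}_n$, and the clean way to secure this uniformly in $b$ is to take $S$ to consist only of even integers, so that $S+1$ is all odd. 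Exhibiting at least one weight-$g$ wcycle in each case---a $\{1L,2H\}$-triangle in the three $g=5$ cases once $b\ge 1$; $L$ itself in the boundary cases; a heavy triangle inside a layer in the $g=6$ ``doubling''; a $5$-cycle using four consecutive $L$-edges and one heavy distance-$4$ chord in the circulant construction---then pins the girth at exactly $g$.
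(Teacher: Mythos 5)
Your proposal is correct, and for four of the five cases it is essentially the paper's proof: the lower bounds are exactly the Moore-like bounds $M_1^+$ and $M_2^+$, and your constructions for $(1,b,5)$ with $b$ even (complement of a matching), $(1,b,5)$ with $b$ odd (complement of a matching plus one further matching --- the paper phrases this via the $1$-factorization $\{\tilde F_i\}$ of $K_{b+3}$, you via a Hamiltonian cycle containing $L$, which is the same graph up to relabelling), $(2,b,5)$ ($L=C_n$, $H=\overline{L^2}$), and $(1,b,6)$ (a light matching joining two copies of $K_{b+1}$) coincide with the paper's. The one genuinely different piece is $(2,b,6)$: the paper takes $H=\overline{C_{b+3}}\cupdot\overline{C_{b+3}}$ with $L$ a light $(2b+6)$-cycle ``zigzagging'' between the two parts, subject to the side condition that no two consecutive light edges join vertices adjacent in $H$; you instead put everything on $\mathbb{Z}_{2b+6}$ with $L=C_{2b+6}$ and $H$ the circulant with connection set $S=\{4,6,\dots,2b+2\}$. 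Your version is arguably cleaner: the only delicate verification (no weight-$5$ triangle with one light and two heavy edges) reduces to $S\cap(S+1)=\varnothing$, which your choice of an all-even $S$ in an even modulus settles by parity, whereas the paper leaves the analogous feasibility of its zigzag condition unverified. Two cosmetic slips worth fixing in a write-up: the weight-$5$ witness for $(2,b,5)$ with $b=1$ is the $3L{+}1H$ quadrilateral rather than a $1L{+}2H$ triangle (no vertex of $C_6$ is at distance $3$ from two adjacent vertices), and for $(1,1,6)$ the weight-$6$ witness is the alternating $4$-cycle rather than a heavy triangle.
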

\begin{proof}
  Since the values match the lower bounds $M_{1}^{+}(a,b,5)$ or
  $M_{2}^{+}(a,b,6)$, it will suffice to give constructions matching
  these values.

  \textbf{Case 1 [$a=1$, $g=5$ and $2 \leq b\equiv 0$]:} Take
  $n=b+2\equiv 0$ and take $L=\tilde{F}_{0}$ (see
  Lemma~\ref{lem:1factor}). Then $G=(L,\overline{L})$ guarantees
  $n(a,b,g)\leq b+2$.

  \textbf{Case 2 [$a=1$, $g=5$ and $3 \leq b\equiv 1$]:} Take
  $n=b+3\equiv 0$, $L=\tilde{F}_{0}$ and
  $H=\bigcup_{i=1}^{n-3}\tilde{F}_{i}$. Then $G=(L,H)$ guaranties
  $n(a,b,g)\leq b+3$.

  \textbf{Case 3 [$a=2$ and $g=5$]:} Take $n=b+5$. Note
  that $n$ may be even or odd. Take $L=C_{n}$, the $n$-cycle. Let
  $H=\overline{L^2}$ (in this case, $L^{2}\cong C_{n}(1,2)$ is the
  circulant on $n$ vertices with jumps 1 and 2). Now $G=(L,H)$
  guaranties $n(a,b,g)\leq b+5$. 


  \textbf{Case 4 [$a=1$, $g=6$ and $b\geq 1$]:} Take
  $n=2b+2\equiv 0$ and $m=\hh{n}=b+1$. Take $H=K_{m}\cupdot K_{m}$
  and let $L$ be a matching between these two complete subgraphs. Then
  $G=(L,H)$ guaranties $n(a,b,g)\leq 2b+2$. 

  \textbf{Case 5 [$a=2$ and $g=6$]:} Take
  $n=2b+6\equiv 0$ and $m=\hh{n}=b+3$. Take
  $H=\overline{C_m}\cupdot \overline{C_m}$ and let $L$ be a $2m$-cycle
  zigzagging between these two complements of cycles, taking care that
  no two consecutive edges of $L$ join two adjacent vertices in
  $H$. Then $G=(L,H)$ guaranties $n(a,b,g)\leq 2b+6$. 
%
%
%
\end{proof}

\section{Experimental results}\label{sec:exp}

We used computerized, exhaustive searches using backtracking with
symmetry reductions to obtain the values of $n(a,b,g)$ in the
following tables. The experimental results for the cases $g=3$ and
$g=4$ coincide with the characterizations in the respective sections,
and hence they are omitted here. We also omit the cases $a=0$ and
$b=0$ since those were already characterized in the preliminaries
section. Blank squares are unknown values. In all these cases the
computed values differ by either 0, 2 or 4 from the respective
Moore-like bounds in Theorem~\ref{thm:MBounds}. When the difference is 2
the number in the table is in boldface and black, when the difference
is 4 the number in the table is in blue.  We used GAP~\cite{GAP4} and
YAGS~\cite{YAGS} for these computations.\bigskip

\noindent\begin{minipage}[t]{7cm}
\noindent{}$n(a,b,5)$\\
\begin{tabular}{|r||r|r|r|r|r|r|r|r|}
\hline 
$a\backslash b$&1 &2 &3 &4 &5 &6 &7 &8 \\ \hline\hline
1 &$\infty$ &4 &6 &6 &8 &8 &10 &10 \\ \hline
2 &6 &7 &8 &9 &10 &11 &12 &13 \\ \hline
3 &12 &12 &14 &14 & 16 & 16 &&   \\ \hline
4 &\textbf{20} &19 &20 & 21 &&&&    \\ \hline
\end{tabular}\bigskip

\noindent{}$n(a,b,6)$\\
\begin{tabular}{|r||r|r|r|r|r|r|r|r|}
\hline
$a\backslash b$ &1 &2 &3 &4 &5 &6 &7 &8 \\ \hline\hline
1 &4 &6 &8 &10 &12 &14 &16 &18 \\ \hline
2 &8 &10 &12 &14 &16 &18 &20 &  \\ \hline
3 &16 &18 &20 &22 &24 &  &&   \\ \hline
4 &28 &30 &32 &  &&&&\\ \hline
\end{tabular}\bigskip

\noindent{}$n(a,b,7)$\\
\begin{tabular}{|r||r|r|r|r|r|}
\hline
$a\backslash b$ &1 &2 &3 &4 &5 \\ \hline\hline
1 &$\infty$ &\textbf{10} &\textbf{14} &\textbf{\color{blue}18} &\textbf{\color{blue}22} \\ \hline
2 &\textbf{14} &\textbf{19} &  & &  \\ \hline
\end{tabular}

\end{minipage}\hspace{1cm}\begin{minipage}[t]{4cm}

\noindent{}$n(a,b,8)$\\
\begin{tabular}{|r||r|r|r|r|}
\hline
$a\backslash b$ &1 &2 &3 &4 \\ \hline\hline
1 &$\infty$ &10 &\textbf{16} &\textbf{20} \\ \hline
2 &16 &24 & &  \\ \hline
\end{tabular}\vspace{1.3cm}

\noindent{}$n(a,b,9)$\\
\begin{tabular}{|r||r|r|r|}
\hline
$a\backslash b$ &1 &2 &3 \\ \hline\hline
1 &6 &\textbf{14} &\textbf{\color{blue}24} \\ \hline
2 &\textbf{24} &   &\\ \hline
\end{tabular}\vspace{1.3cm}

\noindent{}$n(a,b,10)$\\
\begin{tabular}{|r||r|r|r|}
\hline
$a\backslash b$ &1 &2 &3 \\ \hline\hline
1 &$\infty$ &16 &\textbf{28} \\ \hline
2 &\textbf{\color{blue}32} & &  \\ \hline
\end{tabular}\vfill{ }
\end{minipage}\bigskip 

Besides the values in the tables, we also computed $n(1,2,11)=24=M_{1}^{+}+4$, $n(1,2,12)=26=M_{2}^{+}$ and $n(5,2,6)=46=M_{2}^{+}$.

\section{General constructions for $(a,b,g)$-wcages}%
\label{sec:bounding}

Let $X$ be an $(r,g')$-cage. Assume that $X$ has an $a$-factor
$F$. Then $G=(F,X-F)$ is an $(a,r-a,g)$-wgraph for some girth $g$ with
$g'\leq g \leq 2g'$, thus $n(a,r-a,g)\leq n(r,g')$ in this
case. Assume further that $F$ is an $a$-factor of girth $g(F)\geq g'+1$, 
then we have $g\geq g'+1$: This is
true since a cycle of length $g'$ in $X$ can not be a cycle of $F$ and hence 
every cycle in $G$ must contain at least one heavy edge.
Moreover, if $X$ contains both a $a$-factor $F$  with $g(F)\geq g'+1$ and
a cycle $C$ of girth $g'$ with $|E(C)-E(F)|=1$, then $g=g'+1$.

A case of special interest is when $X$ is Hamiltonian. In this case, the 
Hamiltonian cycle $F$ is a $2$-factor an certainly $g(F)\geq g'+1$ whenever 
$r\geq 3$. It follows that $G=(F,X-F)$ is a $(2,r-2,g)$-wgraph for some $g$ 
with $g'+1\leq g\leq 2g'$.

These constructions can be applied in many cases to obtain upper
bounds for $(a,b,g)$-wcages. At least in the following cases, this method
matches the experimental results in the previous section and hence,
the produced wgraphs are indeed wcages:

\begin{enumerate}
\item Petersen's graph: $n(3,5)= 10$, gives $n(1,2,8)=10$. 
\item Heawood's graph: $n(3,6)=14$ gives $n(2,1,7)=14$ and $n(1,2,9)=14$.
\item McGee's graph: $n(3,7)=24$ gives $n(2,1,9)=24$ and $n(1,2,11)=24$.
\end{enumerate} 

Moreover, in the following cases the constructions give interesting $(a,b,g)$-wgraphs of small excess:

\begin{enumerate}
\item Hoffman-Singleton graph: $n(7,5)=50$ gives a $(5,2,6)$-wgraph on 50 vertices (but $n(5,2,6)=46$).
\item Tutte-Coxeter Graph: $n(3,8)=30$ gives a $(1,2,12)$-wgraph on 30 vertices, (but $n(1,2,12)=26$).
\end{enumerate}

Recall that a \emph{Moore cage} $X$ is an $(r,g')$-cage that attains the Moore bound, and that the \emph{Moore bound} is $n_{0}(r,g')=n_{0}(r,0,g')$ as described after Theorem~\ref{thm:MBounds}. Recall that this bounds come from the trees described in Section~\ref{sec:mbounds}, which in the case $a=r$, $b=0$ gives the standard Moore trees.

\begin{theorem} Assume $r\geq 3$, $g'\equiv 0$ and that there is an $(r,g')$-cage which is a Hamiltonian Moore cage, then we have that: 
$$n(2,r-2,g)\leq n_0(r,g')$$
for some $g$ with $g'+1\leq g\leq \frac{3}{2}g'-1$.
\end{theorem}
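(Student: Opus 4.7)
My plan is to take $F$ a Hamilton cycle in the given Moore $(r,g')$-cage $X$ and to set $G=(F,X-F)$, which is a $(2,r-2)$-regular wgraph on $n_0(r,g')$ vertices. The goal reduces to bounding its girth $g$ by $g'+1 \leq g \leq \tfrac{3}{2}g' - 1$. The lower bound $g \geq g'+1$ is routine, as already noted in the paragraph preceding the theorem: every wcycle of $G$ lifts to a cycle of $X$ of length at least $g'$, and either coincides with $F$ (hence has length $n_0(r,g')>g'$) or uses at least one heavy edge (hence has weight $\geq g'+1$).

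For the upper bound, I would exploit two extremal properties of even-girth Moore cages: writing $g'=2k$, these cages are bipartite (being incidence graphs of generalized $k$-gons) and have diameter exactly $k$. Picking any $k+2$ consecutive vertices $v_1,\ldots,v_{k+2}$ on $F$ and letting $A$ be the $F$-arc of length $k+1$ from $v_1$ to $v_{k+2}$, the idea is to combine $A$ with a short $X$-path $P$ between the same endpoints to produce a cycle of length exactly $g'$ in $X$ rich in light edges.

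The critical step is to establish $d_X(v_1,v_{k+2})=k-1$ exactly. The diameter bound gives $d\leq k$; bipartiteness together with $|A|=k+1$ forces $d\equiv k+1\pmod 2$; and $d\leq k-3$ is ruled out because the edge-symmetric difference $A\triangle P$ has all even vertex-degrees, hence decomposes into cycles of $X$ of total length at most $|A|+|P|=2k-2<2k=g'$, which is impossible since every cycle of $X$ has length at least $g'$. Combining these constraints pins $d$ to $k-1$. The same symmetric-difference inequality, now with equality, forces $|A\cap P|=0$, so $A\cup P$ is a single cycle of length $2k$ in $X$ with every vertex of degree $2$, which rules out shared internal vertices. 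Thus $C:=A\cup P$ is a simple cycle of length $g'=2k$ with $k+1$ light edges from $A$ and at most $k-1$ heavy edges from $P$, giving weight at most $(k+1)+2(k-1)=3k-1=\tfrac{3}{2}g'-1$.

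The main technical hurdle I expect is the tight parity/length argument pinning $d_X(v_1,v_{k+2})=k-1$: it is precisely here that the Moore-cage hypothesis bites, since bipartiteness combined with the extremal girth is what eliminates the alternative $d=k$ (which would force a cycle of length $2k+1$ and thereby spoil the weight bound). As a sanity check, in the base case $g'=4$ the construction gives $X=K_{r,r}$, $P$ reduces to a single heavy edge, and $C$ is a 4-cycle of weight $5=\tfrac{3\cdot 4}{2}-1$, matching the claimed bound.
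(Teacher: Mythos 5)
Your proposal is correct, and after the common first step (take a Hamiltonian cycle $F$ in the Moore cage $X$, set $G=(F,X-F)$, and get $g\ge g'+1$ because an all-light wcycle would have to be $F$ itself) it diverges genuinely from the paper's argument. The paper works inside the Moore tree $T$ rooted at an edge $xy\in F$: it observes that $F$ must descend from $x$ and $y$ through tree edges to leaves $\hat x,\hat y$, and then uses the Moore-cage property that $\hat x$ is adjacent to exactly one leaf of each subtree $T_{y_i}$ to close a $g'$-cycle containing the $\tfrac{g'+2}{2}$ light edges $P_x\cup\{xy,yy_1\}$, giving weight at most $\tfrac{g'+2}{2}+2\cdot\tfrac{g'-2}{2}=\tfrac{3}{2}g'-1$. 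You instead take an arbitrary arc $A$ of $F$ of length $k+1$ (with $g'=2k$), close it with a geodesic $P$, and pin $|P|=k-1$ via diameter $\le k$, bipartiteness, and the symmetric-difference-versus-girth argument, which simultaneously shows $A\cup P$ is a simple $g'$-cycle; the edge count $(k+1)+2(k-1)=\tfrac{3}{2}g'-1$ is the same. What your route buys is a cleaner combinatorial core: you avoid the somewhat delicate claim that $F$ "goes down" the tree to leaves, and you package all the Moore-cage input into two classical facts (even-girth Moore cages are bipartite incidence graphs of generalized $k$-gons with diameter $k$), which you should cite explicitly since the paper never invokes bipartiteness. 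What the paper's route buys is robustness: its tree argument adapts with minor changes to the odd-girth variant mentioned right after the theorem (bound $\tfrac{3}{2}g'-\tfrac{1}{2}$), whereas your parity step genuinely relies on bipartiteness and so does not carry over to odd $g'$. Neither of these is a gap in the stated even-girth case; your proof stands as written.
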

\begin{proof}
Let $X$ be an $(r,g')$-cage which is a Hamiltonian Moore cage, with $g'\equiv 0$ and $r\geq 3$. Let $C$ be a Hamiltonian cycle of $X$. Starting with any edge $xy\in X$, we can construct its Moore tree $T$ within $X$, which consist of $xy$ and two subtrees of $T$, $T_x$ and $T_y$, which are rooted at $x$ and $y$ respectively. Each of these trees have depth $\hh{g'-2}$. Since $X$ is a Moore cage, this tree $T$ is a spanning subgraph of $X$ and the rest of the edges of $X$ connect leaves in $T_x$ to leaves in $T_y$. 

Then we can construct the wgraph $G=(C,G-C)$, which is a $(2,r-2,g)$-wgraph, for some girth $g$ satisfying $g\geq g'+1$. We take an edge $xy \in C$ and construct the Moore tree $T$ in $X$ starting with it. Then $C$ must pass by $xy$ and go down from there to some leaf $\hat{x}$ of $T_x$ and some leaf $\hat{y}$ of $T_y$. Let $P_x$ and $P_y$ be the corresponding paths in $T$ that go from $x$ to $\hat{x}$ and from $y$ to $\hat{y}$.

Let $y_1, y_2, \ldots, y_{r-1}$ be the neighbours of $y$ in $T_y$, assume without loss that $y_1$ is in $C$. Consider the subtrees $T_{y_i}$ of $T$ rooted at $y_i$ for $i=1,2,\ldots,r-1$. All of these trees have height $\frac{g'-4}{2}$. Notice that $\hat{y}$ is a leaf of $T_{y_1}$. 

If $\hat{x}$ and $\hat{y}$ are adjacent in $X$, clearly $\hat{x}\hat{y}$ is not in $C$, and then we have a cycle of weight $g'+1$ on $G$. Suppose that $\hat{x}$ and $\hat{y}$ are not adjacent in $X$. Since the girth of $X$ is $g'$ and $\hat{x}$ is has degree $r$, $\hat{x}$ must be adjacent to exactly one leaf of each of $T_{y_1}, T_{y_2},\ldots ,T_{y_{r-1}}$. Hence, there is a neighbour, $y'$, of $\hat{x}$ among the leaves of $T_{y_1}$. Let $P'_{y_{1}}$ be the path in $T_1$ from $y_1$ to $y'$. 
Then there is a cycle $C'=P_x\cup xy \cup yy_{1}\cup P'_{y_{1}}\cup \hat{x}y'$ of length $g'$ in $X$ with at least $\hh{g'+2}$ edges in $C$ and at most $\hh{g'-2}$ not in $C$. Hence, the girth of $G$ is bounded by the weight of $C'$ as follows $g\leq \hh{g'+2} + 2\left(\hh{g'-2}\right)=\hh{3}g'-1.$
We conclude that $g'+1\leq g\leq \frac{3}{2}g'-1.$
\end{proof}

The previous theorem is still true if we replace $g'\equiv 0$ with $g'\equiv 1$ and the upper bound
with $g\leq \hh{3}g'-\hh{1}$. However, besides the hypothetical $(57,5)$-cage, the only Hamiltonian Moore cages of odd girth with $r\geq 3$ are the complete graphs and the Hoffman-Singleton graph \cite{EJ13}. The complete graphs only give easy bounds already established in Theorem~\ref{thm:mainfour}, and the Hoffman-Singleton graph gives a bad upper bound: $n(2,5,6)=16<50=n(7,5)$.

It is a well known observation that all Moore $(r,6)$-cages are incidence graphs of projective planes of order $(r-1)$ (see for instance \cite{CL96}). Also, we know from \cite{LMV13} that all of them are Hamiltonian. Furthermore, it is also well known that projective planes, and hence Moore cages of girth $6$, exists when $(r-1)$ is a prime power and that $n_0(r,6)=2(r^2-r+1)$.
Hence, the previous Theorem gives us the following corollary for girths $g\in\{7,8\}$:

\begin{corollary}\label{6hamiltcages}
Let $(r-1)$ be a prime power then:
$$n(2,r-2,g)\leq 2(r^2-r+1)$$
for some $g$ with $7\leq g \leq 8$.

Moreover, let $X$ be an $(r,6)$-cage. If $X$ has a $6$-cycle with all the edges on the Hamiltonian cycle except one, we have that: 
$$n(2,r-2,7)= 2(r^2-r+1),$$
otherwise: 
$$n(2,r-2,8)= 2(r^2-r+1).$$
\end{corollary}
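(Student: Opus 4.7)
The plan is to split the corollary into three pieces: the upper bound, the determination of which girth (7 or 8) the construction produces, and finally the matching lower bound.

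For the first part, I would invoke the classical fact that Moore cages of girth~6 are precisely the incidence graphs of projective planes of order $r-1$, so they exist whenever $r-1$ is a prime power and have exactly $n_0(r,6)=2(r^2-r+1)$ vertices; by \cite{LMV13} all such cages are Hamiltonian. Feeding any such $X$ with a Hamiltonian cycle $C$ into the preceding theorem, applied with $g'=6$, immediately yields a $(2,r-2,g)$-wgraph $G=(C,X-C)$ on $2(r^2-r+1)$ vertices with $7\leq g\leq 8$. This already proves the unlabeled inequality of the corollary.

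For the second part (which girth is attained), I would enumerate the possible wcycles of weight 7 in $G$. A wcycle of weight $\ell+2h=7$ corresponds to a cycle in $X$ of length $\ell+h$ with $\ell$ edges in $C$ (light) and $h$ edges off $C$ (heavy). The feasible pairs are $(\ell,h)\in\{(7,0),(5,1),(3,2),(1,3)\}$. The last two force a $5$- or $4$-cycle in $X$, which is ruled out by $g(X)=6$; the first would force a $7$-cycle entirely inside the simple cycle $C$, which is impossible since $|C|=2(r^2-r+1)>7$. Hence a weight-$7$ wcycle exists in $G$ if and only if $X$ contains a $6$-cycle sharing exactly $5$ edges with $C$. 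Combined with the bound $g\geq 7$ already established in the previous theorem, this shows $g=7$ in the first scenario of the corollary and $g=8$ in the second.

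The main obstacle, and the step I expect to absorb most of the work, is the matching lower bound: $n(2,r-2,7)\geq 2(r^2-r+1)$ in the first case and $n(2,r-2,8)\geq 2(r^2-r+1)$ in the second. The Moore-like bounds of Theorem~\ref{thm:MBounds} for $a=2$ are only linear in $r$, so they fall badly short and cannot be the source. The natural strategy is to take an arbitrary $(2,r-2,g)$-wgraph $G=(L,H)$ and extract from it an ordinary $r$-regular graph of girth at least~$6$, so that the classical Moore bound $n(r,6)=2(r^2-r+1)$ can be applied. The obvious candidate is $L\cup H$; however, the weight constraint only forces its girth to be at least $4$, since a short $L\cup H$-cycle dominated by heavy edges can still have large weight. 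To get past this, I would use that $L$ is a $2$-regular graph of girth at least $g$ (so a union of long cycles), combined with the hypothesis on the non-existence of certain short configurations to exclude short cycles in $L\cup H$ beyond what the weighted girth alone prohibits. This localized girth analysis, tightly tailored to each of the two cases, is where the real content of the proof lies.
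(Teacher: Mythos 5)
Your first two steps reproduce the paper's own derivation: the corollary is obtained by feeding a Hamiltonian Moore $(r,6)$-cage (an incidence graph of a projective plane of order $r-1$, which exists when $r-1$ is a prime power, has $n_0(r,6)=2(r^2-r+1)$ vertices, and is Hamiltonian by the cited reference \cite{LMV13}) into the preceding theorem with $g'=6$, giving $7\leq g\leq\frac{3}{2}\cdot 6-1=8$; the girth dichotomy is the remark earlier in Section~\ref{sec:bounding} that $g=g'+1$ when $X$ has a $g'$-cycle with all but one edge on the $2$-factor. Your enumeration of the pairs $(\ell,h)$ with $\ell+2h=7$ is a correct, slightly more explicit version of that remark, and it also supplies the converse implication that the ``otherwise'' branch needs.

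The obstacle you flag in your third step is genuine, but it is a defect of the statement rather than of your attempt: the paper offers no lower-bound argument whatsoever (the corollary is presented as an immediate consequence of the theorem, which only yields the upper bound and the value of $g$), and no such argument can exist, because the claimed equality is contradicted by the paper's own data. For $r=4$ one has $2(r^2-r+1)=26$, while the tables in Section~\ref{sec:exp} report $n(2,2,7)=19$ and $n(2,2,8)=24$, so whichever branch of the dichotomy the $(4,6)$-cage falls into, equality fails; the examples right after the theorem (e.g.\ the Tutte--Coxeter graph yielding a $(2,1,9)$-wgraph on $30$ vertices although $n(2,1,9)=24$) already show that this construction need not produce wcages. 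The ``$=$'' in the corollary should therefore be read as ``$\leq$'', and you should not try to complete your reduction to the classical Moore bound: as you yourself observe, the weighted girth only forces $g(L\cup H)\geq 4$ (a $4$-cycle of heavy edges has weight $8$), and in any case the inequality you would be trying to prove is false in general.
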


We point out that it is a folklore conjecture that all cages are Hamiltonian except for the Petersen graph and hence these results should have wide applicability. For instance, besides the uses that we already mentioned above (Heawood, McGee), we can also apply them to the Tutte-Coxeter $(3,8)$-cage on $30$ vertices to obtain a $(2,1,9)$-wgraph of order $30$ (but $n(2,1,9)=24$). Also, Benson $(3,12)$-cage on $126$ vertices gives us a $(2,1,13)$-wgraph of order 126. This last is the best upper bound that we know for $n(2,1,13)$ and the Moore-like lower bound is $n_0(2,1,13)=66$, our algorithms can only provide the lower bound $n(2,1,13)\geq 68$. 

\bibliography{biblio}

\begin{thebibliography}{10}

\bibitem{AABLL13}
M. Abreu, G. Araujo-Pardo, C. Balbuena, D. Labbate and G. L\'opez-Ch\'avez.
\newblock {\em Biregular cages of girth five}.
\newblock Electron. J. Combin. {\bf 20} (2013) 1--14. \verb+doi:10.37236/2594+.

\bibitem{ADG22}
G. Araujo-Pardo, C. De~la Cruz and D. Gonz\'alez-Moreno.
\newblock {\em Mixed cages: monotonicity, connectivity and upper bounds}.
\newblock Discrete Math. {\bf 345} (2022) Paper No. 112792, 11.
  \verb+doi:10.1016/j.disc.2021.112792+.

\bibitem{AHM19}
G. Araujo-Pardo, C. Hern\'andez-Cruz and J.J. Montellano-Ballesteros.
\newblock {\em Mixed cages}.
\newblock Graphs Combin. {\bf 35} (2019) 989--999.
  \verb+doi:10.1007/s00373-019-02050-1+.

\bibitem{AJRS22}
G. Araujo-Pardo, R. Jajcay, A. Ramos-Rivera and T. Sz\H{o}nyi.
\newblock {\em On a relation between bipartite biregular cages, block designs
  and generalized polygons}.
\newblock J. Combin. Des. {\bf 30} (2022) 479--496.
  \verb+doi:10.1002/jcd.21836+.

\bibitem{YAGS}
C. Cedillo, R. MacKinney-Romero, M.A. Piza{\~n}a, I.A. Robles and R.
  Villarroel-Flores.
\newblock {\em YAGS - Yet Another Graph System, Version 0.0.5}, 2019.
\newblock \verb+http://xamanek.izt.uam.mx/yags/+.

\bibitem{CL96}
G. Chartrand and L. Lesniak.
\newblock {\em Graphs \& Digraphs}.
\newblock Chapman \& Hall/CRC, 3rd edition, 1996.
\newblock (First edition: 1979).

\bibitem{Exo25}
G. Exoo.
\newblock {\em On mixed cages}.
\newblock Discrete Math. Theor. Comput. Sci. {\bf 25} (2023) Paper No. 14, 17.
  \verb+doi:10.46298/dmtcs.11057+.

\bibitem{EJ13}
G. Exoo and R. Jajcay.
\newblock {\em Dynamic cage survey}.
\newblock Electron. J. Combin. {\bf DS16} (2013) 55. \verb+doi:10.37236/37+.

\bibitem{EJ16}
G. Exoo and R. Jajcay.
\newblock {\em Biregular cages of odd girth}.
\newblock J. Graph Theory {\bf 81} (2016) 50--56. \verb+doi:10.1002/jgt.21860+.

\bibitem{EJS13}
G. Exoo, R. Jajcay and J. \v{S}ir\'a\v{n}.
\newblock {\em Cayley cages}.
\newblock J. Algebraic Combin. {\bf 38} (2013) 209--224.
  \verb+doi:10.1007/s10801-012-0400-2+.

\bibitem{FRJ19}
S. Filipovski, A. Ramos-Rivera and R. Jajcay.
\newblock {\em On biregular bipartite graphs of small excess}.
\newblock Discrete Math. {\bf 342} (2019) 2066--2076.
  \verb+doi:10.1016/j.disc.2019.04.004+.

\bibitem{GAP4}
The GAP~Group.
\newblock {\em {GAP -- Groups, Algorithms, and Programming, Version 4.10}},
  2019.
\newblock \verb+(http://www.gap-system.org)+.

\bibitem{HS60}
A.J. Hoffman and R.R. Singleton.
\newblock {\em On {M}oore graphs with diameters {$2$} and {$3$}}.
\newblock IBM J. Res. Develop. {\bf 4} (1960) 497--504.
  \verb+doi:10.1147/rd.45.0497+.

\bibitem{JS11}
R. Jajcay and J. \v{S}ir\'a\v{n}.
\newblock {\em Small vertex-transitive graphs of given degree and girth}.
\newblock Ars Math. Contemp. {\bf 4} (2011) 375--384.
  \verb+doi:10.26493/1855-3974.124.06d+.

\bibitem{LMV13}
F. Lazebnik, K.E. Mellinger and O. Vega.
\newblock {\em Embedding cycles in finite planes}.
\newblock The Electron. J. Combin. {\bf 20} (2013) 1--17.
  \verb+doi:10.37236/3377+.

\bibitem{Sac63}
H. Sachs.
\newblock {\em Regular graphs with given girth and restricted circuits}.
\newblock J. London Math. Soc. {\bf 38} (1963) 423--429.
  \verb+doi:10.1112/jlms/s1-38.1.423+.

\bibitem{TE22}
J. Tuite and G. Erskine.
\newblock {\em On networks with order close to the {M}oore bound}.
\newblock Graphs Combin. {\bf 38} (2022) Paper No. 143, 34.
  \verb+doi:10.1007/s00373-022-02535-6+.

\bibitem{Tut47}
W.T. Tutte.
\newblock {\em A family of cubical graphs}.
\newblock Proc. Cambridge Philos. Soc. {\bf 43} (1947) 459--474.
  \verb+doi:10.1017/s0305004100023720+.

\end{thebibliography}
\bibliographystyle{mapbib}

\end{document}